\newcommand{\PreserveBackslash}[1]{\let\temp=\\#1\let\\=\temp}
\newcolumntype{C}[1]{>{\PreserveBackslash\centering}p{#1}}
\newcolumntype{R}[1]{>{\PreserveBackslash\raggedleft}p{#1}}
\newcolumntype{L}[1]{>{\PreserveBackslash\raggedright}p{#1}}
\begin{document}
\makeatletter
\def\@captype{figure}
\makeatother

\markboth{Yanyan Yu, Weihua Deng, Yujiang Wu}{Positivity and
boundedness preserving schemes for the fractional reaction-diffusion equation}

%
\catchline{}{}{}{}{}
%

\title{Positivity and boundedness preserving
semi-implicit schemes for the fractional reaction-diffusion equation}

\author{Yanyan Yu, Weihua Deng\footnote{Corresponding author. E-mail: dengwh@lzu.edu.cn},\,Yujiang Wu}
\address{School of Mathematics and Statistics, Lanzhou University, Lanzhou 730000,\\
 People's Republic of China}

\maketitle


\begin{abstract}

In this paper, we design a
semi-implicit scheme for the scalar time fractional
reaction-diffusion equation. We theoretically prove that the
numerical scheme is stable without the restriction on the ratio of
the time and space stepsizes, and numerically show that the convergent orders are $1$ 
in time and $2$ in space. As a
concrete model, the subdiffusive predator-prey system is
discussed in detail. First, we prove that the analytical solution of the
system is positive and bounded. Then we use the provided numerical
scheme to solve the subdiffusive predator-prey system, and
theoretically prove and numerically verify that the numerical scheme
preserves the positivity and boundedness.

%
%
%
\end{abstract}

\keywords{time fractional
reaction-diffusion equation; subdiffusive predator-prey system;
positivity; boundedness.}

\ccode{AMS Subject Classification: 65M06, 26A33, 45M20}

\section{Introduction}

Mathematically, the reaction-diffusion systems take the form of
semi-linear parabolic partial differential equations. Usually, in
real world applications, the reaction term describes the birth-death
or reaction occurring inside the habitat or reactor. The diffusion
term models the movement of many individuals in an environment or
media. The individuals can be very small particles in physics,
bacteria, molecules, or cells, or very large objects such as
animals, plants. The diffusion is often described by a power law,
$\langle x^2(t) \rangle - \langle x(t) \rangle^2 \sim Dt^\alpha$,
where $D$ is the diffusion coefficient and $t$ is the elapsed
time.\cite{Metzler:00} In a normal diffusion, $\alpha=1$. If
$\alpha>1$, the particle undergoes superdiffusion, and it results from
 active cellular transport processes. If $\alpha<1$, the
phenomenon is called subdiffusion, it can be  protein diffusion
within cells, or diffusion through porous media. This paper concerns
the subdiffusive reaction-diffusion system, which corresponds to the
classical reaction-diffusion equation with its first order time
derivative replaced by the $\alpha-$th order fractional time
derivative.

As an important concrete example, we detailedly discuss the
subdiffusive predator-prey model. All living things within an
ecosystem are interdependent. A change in the size of one population
or the environment they live affects all other organisms within the
ecosystem. This is shown particularly clearly by the relationship
between predator and prey populations. Cavani and
Farkas\cite{Cavani1994} introduce diffusion to the
Michaelis-Menten-Holling predator-prey model. Then the more general
models are considered\cite{Bartumeus2001,Pang2003,Wang2003}.  Here
we further discuss the Michaelis-Menten-Holling predator-prey model
with the subdiffusive mechanism\cite{aly2011}:
\begin{equation}\label{Eq1.1}
\begin{array}{llll}
\displaystyle\frac{\partial^\alpha N}{\partial t^\alpha} &= & \displaystyle\frac{\partial^2 N}{\partial x^2}+N\left(1-N-\frac{a P}{P+N}\right), & x \in (l,r),\,\,t>0,\\
\\
\displaystyle\frac{\partial^\alpha P}{\partial t^\alpha} &= &
\displaystyle\frac{\partial^2 P}{\partial x^2}+\sigma P
\left(-\frac{\gamma+\delta \beta P}{1+\beta P}+\frac{N}{P+N}\right),
& x \in (l,r),\,\,t>0,
\end{array}
\end{equation}
with the positive initial conditions and the homogeneous Dirichlet
boundary conditions
\begin{equation}\label{Eq1.2}
N(l,t)=N(r,t)=P(l,t)=P(r,t)=0,
\end{equation}
or
the homogeneous Neumann  boundary conditions
$$
(\partial {N(x,t)}/\partial x) |_{x=l \rm{~and~} r, \rm{~respectively}}=(\partial {P(x,t)}/\partial x) |_{x=l \rm{~and~} r, \rm{~respectively}}=0, \eqno{(1.2^\prime)}
$$
where $a$, $\sigma$, and $\beta$  are positive real numbers, and $0<\gamma \le \delta$.
We prove that the analytical solution of (\ref{Eq1.1}) and
((\ref{Eq1.2}) or $(1.2^\prime)$) is positive and bounded.

For the analytical solution of the subdiffusion equation, the reader
can refer to Refs.
\refcite{Agrawal2002,Gorenflo2002,Mainardi1996,Schneider1989,Wyss1986},
and the references therein. There are also some works for the
numerical solutions of subdiffusion equations, e.g.,~Refs.
\refcite{Chen2007,Cui2009,Deng2007,Gao2011,Lin2007,Yuste2005}. In
particular, Zhang and Sun develop the semi-implicit schemes
for the subdiffusive equations\cite{Zhang2011}. For the past few
decades, the semi-implicit schemes are widely used in various
complicated time dependent non-linear equations. Usually the semi-implicit schemes use two time levels; in time level 1, the nonlinear terms are explicitly computed, and then to implicitly solve the high order linear terms. The expected advantage of the semi-implicit scheme is that as the nonlinear terms are computed efficiently but not losing good numerical stability.
Here, we
construct the semi-implicit scheme to numerically solve the
subdiffusive reaction-diffusion equation.
The stability of the numerical scheme is strictly proved, and
it has no restriction on the ratio of the sizes of space steps and
time ones. The convergent orders $1$ in time and $2$ in space
are theoretically obtained and numerically verified. Moreover, we
use the provided scheme to numerically solve the subdiffusive
predator-prey model. We show both theoretically and numerically that
it preserves the positivity and boundedness of the solutions of the
subdiffusive predator-prey model.

The outline of the paper is as follows. In Section 2, we propose the
time fractional semi-implicit  scheme for the subdiffusive
reaction-diffusion equation. We discuss the stability and
convergence of the proposed scheme in Section 3, and prove that the
temporal approximation order is $1$ 
and the order in space is
$2$. In Section 4, we first prove the positivity and boundedness of
the solution of the subdiffusive predator-prey model, then certify
that the numerical scheme preserves its positiveness and
boundedness. In Section 5, we perform the numerical experiments to
confirm the convergent orders and positivity and boundedness
preserving. We conclude the paper with some remarks in the last
section.

\section{Scheme for the subdiffusive reaction-diffusion equation}

We first consider the following scalar subdiffusive
reaction-diffusion equation:
\begin{equation}\label{2.1}
\frac{\partial^\alpha u(x,t)}{\partial t^\alpha} =\frac{\partial^2
u(x,t)}{\partial x^2}+f(u(x,t)),
\end{equation}
with $ x \in \Omega=(0,1) $, $ 0< t \leq T $, $0<\alpha<1$, the
initial condition
\begin{equation}\label{2.2}
u(x,0)=g(x),\qquad x\in\Omega,
\end{equation}
and the boundary conditions
\begin{equation}\label{2.3}
u(0,t)=u(1,t)=0,\quad 0\leq t\leq T,
\end{equation}
or
$$
({\partial u(x,t)}/{\partial x})|_{x=0}=({\partial u(x,t)}/{\partial x})|_{x=1}=0,\quad 0\leq t\leq T,\eqno{(2.3^\prime)}
$$
where $\frac{\partial^\alpha u(x,t)}{\partial t^\alpha}$ is the time
fractional Caputo derivative defined as
%
\begin{equation}
\frac{\partial^\alpha u(x,t)}{\partial
t^\alpha}=\frac{1}{\Gamma(1-\alpha)}\int^{t}_{0}\frac{\partial
u(x,s)}{\partial s}\frac{1}{(t-s)^{\alpha}}ds,\ 0<\alpha <1.
\end{equation}
For ease of presentation, we uniformly divide the spacial domain
$\Omega=(0,1)$ into $M$ subintervals with stepsize $h$ and the time
domain $(0,T)$ into $N$ subintervals with steplength $\tau$.
Let $x_{i}=i h,\ i=0,1,\cdots,M$; $t_{j}=j\tau,\ j=0,1,\cdots,N$.
Let the grid function be ${u^{j}_{i}=u(x_i,t_j),\, 0\leq i\leq
M,\,0\leq j\leq N}$, denote
\begin{equation} \label{2.5}
\delta_{x}^2u^{j}_{i}=\frac{u^{j}_{i+1}-2u^{j}_{i}+u^{j}_{i-1}}{h^2},
\end{equation}
and define
\begin{equation}\label{2.6}
D^{\alpha}_{\tau}u_i^{n}=\frac{\tau^{-\alpha}}{\Gamma(2-\alpha)}\big[u_i^{n}-\sum^{n-1}_{j=1}(b_{n-j-1}-b_{n-j})u_i^{j}-b_{n-1}u_i^{0}\big]
\end{equation}
as the discrete time fractional derivative\cite{Lin2007},  where
$b_{j}=(j+1)^{1-\alpha}-j^{1-\alpha}$. It can be noted that $b_j>0$
and $1=b_0>b_1>\cdots>b_n >
(1-\alpha)(n+1)^{-\alpha}$.  There exists the following
error estimate between $(\partial^\alpha u(x_i,t)/\partial
t^\alpha)|_{t=t_n}$ and $D^{\alpha}_{\tau}u_i^{n}$:
\begin{lemma}[Ref. \refcite{Zhang2011}] \label{lemma2.1}
Suppose $0<\alpha<1$, and let $u(x_i, t)\in C^{2}[0,t_n]$, then
$$
\left| \frac{\partial^\alpha u(x_i,t)}{\partial t^\alpha}|_{t=t_n}
-D^{\alpha}_{\tau} u_i^n \right| \le \frac{6}{\Gamma(2-\alpha)}\cdot
\max\limits_{0 \le t \le t_n} |\partial^2 u(x_i,t)/\partial
t^2|\cdot \tau^{2-\alpha}.
$$
\end{lemma}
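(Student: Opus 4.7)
Let $\phi(s) = u(x_i,s)$ and $M_2 = \max_{0 \le t \le t_n} |\partial^2 u(x_i,t)/\partial t^2|$. My plan is to express the truncation error in a form that exhibits explicitly how $D^\alpha_\tau u_i^n$ approximates the Caputo derivative on each subinterval. First I would split the defining integral of the Caputo derivative as $\sum_{j=0}^{n-1}\int_{t_j}^{t_{j+1}}$, carry out the elementary computation
\[
\int_{t_j}^{t_{j+1}} \frac{ds}{(t_n-s)^\alpha} = \frac{\tau^{1-\alpha}}{1-\alpha}\, b_{n-j-1},
\]
and then verify by a reindexing that replacing $\phi'(s)$ on $[t_j,t_{j+1}]$ with the constant slope $(\phi(t_{j+1}) - \phi(t_j))/\tau$ reproduces exactly the expression (\ref{2.6}) for $D^\alpha_\tau u_i^n$. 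The truncation error then takes the form
\[
R_n \;=\; \frac{1}{\Gamma(1-\alpha)}\sum_{j=0}^{n-1}\int_{t_j}^{t_{j+1}} \left[\phi'(s) - \frac{\phi(t_{j+1})-\phi(t_j)}{\tau}\right]\frac{ds}{(t_n-s)^\alpha},
\]
and the $j$-th integrand is precisely $(\phi - \Pi_1\phi)'(s)$, where $\Pi_1\phi$ is the linear interpolant of $\phi$ on $[t_j,t_{j+1}]$; in particular $\phi - \Pi_1\phi$ vanishes at both endpoints.

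For the ``bulk'' subintervals $j = 0,1,\ldots,n-2$ the weight $(t_n-s)^{-\alpha}$ stays bounded on $[t_j,t_{j+1}]$, so I would integrate by parts to push the derivative off of $\phi - \Pi_1\phi$:
\[
\int_{t_j}^{t_{j+1}} (\phi - \Pi_1\phi)'(s)\,(t_n-s)^{-\alpha}\,ds \;=\; -\alpha \int_{t_j}^{t_{j+1}} (\phi - \Pi_1\phi)(s)\,(t_n-s)^{-\alpha-1}\,ds,
\]
the boundary terms being zero. The standard interpolation bound $|\phi-\Pi_1\phi| \le \tau^2 M_2/8$ together with $\int_{t_j}^{t_{j+1}}(t_n-s)^{-\alpha-1}\,ds = \alpha^{-1}[(t_n-t_{j+1})^{-\alpha} - (t_n-t_j)^{-\alpha}]$ then telescopes over $j$ to give a combined bulk contribution bounded by $(\tau^2 M_2/8)[\tau^{-\alpha} - t_n^{-\alpha}] \le M_2 \tau^{2-\alpha}/8$.

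The main obstacle is the last subinterval $[t_{n-1},t_n]$, where $(t_n-s)^{-\alpha-1}$ fails to be integrable and the integration-by-parts trick breaks down. On this one interval I would revert to a direct estimate: a short Taylor expansion yields $|\phi'(s) - (\phi(t_n)-\phi(t_{n-1}))/\tau| \le \tau M_2$ for $s \in [t_{n-1},t_n]$, and then $\int_{t_{n-1}}^{t_n}(t_n-s)^{-\alpha}\,ds = \tau^{1-\alpha}/(1-\alpha)$ gives a last-interval contribution of order $M_2 \tau^{2-\alpha}/(1-\alpha)$. Combining both contributions, dividing by $\Gamma(1-\alpha)$, and using $\Gamma(2-\alpha) = (1-\alpha)\Gamma(1-\alpha)$ produces
\[
|R_n| \;\le\; \frac{M_2\,\tau^{2-\alpha}}{\Gamma(2-\alpha)} \left[\frac{1-\alpha}{8} + 1\right] \;\le\; \frac{6\, M_2\, \tau^{2-\alpha}}{\Gamma(2-\alpha)},
\]
the last step being a loose bound on the bracket using $0 < \alpha < 1$. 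The conceptual difficulty lies entirely in isolating and separately estimating the singular endpoint interval; tracking the constants to match the stated value $6$ is then routine.
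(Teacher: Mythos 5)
The paper does not prove Lemma~\ref{lemma2.1} at all; it simply cites Zhang and Sun\cite{Zhang2011}, so there is no in-paper argument to compare against. Your proof is correct and follows the standard route for the L1 truncation error: the identification of $D^{\alpha}_{\tau}u_i^n$ with $\frac{1}{\Gamma(1-\alpha)}\int_0^{t_n}(\Pi_1\phi)'(s)(t_n-s)^{-\alpha}\,ds$ via the computation $\int_{t_j}^{t_{j+1}}(t_n-s)^{-\alpha}ds=\frac{\tau^{1-\alpha}}{1-\alpha}b_{n-j-1}$ is right, the integration by parts on the bulk intervals legitimately telescopes because $\phi-\Pi_1\phi$ vanishes at every node and $(t_n-s)^{-\alpha}$ is finite there, and the direct mean-value estimate on the singular last interval is the correct fix where integration by parts fails. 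Your final constant $\frac{1-\alpha}{8}+1\le\frac{9}{8}$ is in fact sharper than the stated $6$, so the lemma follows a fortiori.
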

\noindent
And it is well known that $\delta_{x}^2u^{j}_{i}$ is the
2nd order central difference approximation of $\partial^2
u(x,t)/\partial x^2$ at $(x_i,t_j)$. Replacing $n$ by $n+1$,  Eq.
(\ref{2.6}) can also be recast as
\begin{equation} \label{2.7}
\begin{array}{ll}
D^{\alpha}_{\tau}u^{n+1}
&=\displaystyle\frac{\tau^{1-\alpha}}{\Gamma(2-\alpha)}\sum^{n}_{j=0}b_{j}\frac{u^{n-j+1}-u^{n-j}}{\tau}\\\\
&=\displaystyle\frac{1}{\Gamma(1-\alpha)\tau^{\alpha}}[u^{n+1}-\sum^{n-1}_{j=0}(b_{j}-b_{j+1})u^{n-j}-b_{n}u^{0}].
\end{array}
\end{equation}
Combining (\ref{2.5}) and (\ref{2.7}), we
design the semi-implicit finite difference scheme of (\ref{2.1}) as
\begin{equation}
D^{\alpha}_{\tau}U_i^{n+1}=\delta_x^2 U^{n+1}_{i}+f(U^{n}_{i}),\label{2.8}
\end{equation}
where $i=1,2,\cdots,M-1$ for boundary condition (\ref{2.13}), and $i=0,1,\cdots,M$ for ($2.13^\prime$), $n=1,2,\cdots,N-1$. Denoting $\Gamma(2-\alpha)\tau^{\alpha}$ by $C_{\alpha}$
we rewrite the
above semi-implicit finite difference scheme as
\begin{equation}
\sum^{n}_{j=0}b_{j}\frac{u^{n-j+1}-u^{n-j}}{\tau}=C_{\alpha}\delta_x^2 U^{n+1}_{i}+C_{\alpha}f(U^{n}_{i}).\label{2.10}
\end{equation}
From (\ref{2.2}) the initial
condition is specified as
\begin{equation}\label{2.12}
U^{0}_{i}=g(x_{i}), ~~ {\rm for }~~ i=0,1,\cdots,M;
\end{equation}
and from (\ref{2.3}) or ($2.3^\prime$) the boundary conditions are given as
\begin{equation}\label{2.13}
 U^{n}_{0}=U^{n}_{M}=0, ~~ {\rm for }~~ n=0,1,\cdots,N,
\end{equation}
or
$$
 U^{n}_{-1}=U^{n}_{1},\,\,\,\,  U^{n}_{M+1}=U^{n}_{M-1}, ~~ {\rm for }~~ n=0,1,\cdots,N, \eqno(2.13^\prime)
$$
namely, the central difference discretization is used for the Neumann boundary.



\section{Stability and convergence of the numerical scheme (\ref{2.10})-(\ref{2.13})}
Now we discuss the stability and convergence of the numerical schemes, first we analyze the numerical stability.
Let $\widetilde{U}_i^n$ be the approximate solution of the numerical
scheme (\ref{2.10})-(\ref{2.13}),
and denote $\epsilon_i^n= U^n_i -\widetilde{U}^n_i$. From
(\ref{2.10}), we immediately obtain
\begin{equation} \label{3.1}
\epsilon_i^{n+1}=\sum\limits_{j=0}^n b_j
\epsilon_i^{n-j}-\sum\limits_{j=1}^n b_j
\epsilon_i^{n+1-j}+\frac{C_\alpha}{h^2}(\epsilon_{i+1}^{n+1}-2\epsilon_i^{n+1}+\epsilon_{i-1}^{n+1})+C_\alpha
\big( f(U_i^n)-f(\widetilde{U}_i^n) \big),
\end{equation}
while the perturbation errors of boundary conditions are
\begin{equation} \label{3.01}
\displaystyle \epsilon_0^n=\epsilon_M^n=0,\quad 1\leq n\leq N.
\end{equation}
Throughout the paper, we assume that the function $f$ satisfies the local
Lipschitz condition with the Lipschitz constant $L$, namely,
$$|f(u(x,t))-f(\widetilde{u}(x,t))|\leq L|u(x,t)-\widetilde{u}(x,t)|,$$
when $ |u(x,t)-\widetilde{u}(x,t)|<\varepsilon_0,~~ x \in \Omega ~~{\rm and }~~  t\in [0,T],$
where $\varepsilon_0$ is a given positive constant.

Denote
$e^{n}=(\epsilon_{1}^{n},\epsilon_{2}^{n},\cdots,\epsilon_{M-1}^{n})^{T}$.
We define the discrete $L^2$ norm as $\|e^n\|=\left(h
\sum\limits_{i=0}^{M} \epsilon_i^n \right)^{1/2}$.
Numerical stability result is as follows.

\begin{theorem}\label{Th3.1}
The numerical scheme (\ref{2.10})-(\ref{2.13}) is stable and there
exists
\begin{equation}\label{3.2}
\| e ^n \|\leq\frac{1}{(1-\alpha)-T^{\alpha}\Gamma(2-\alpha)L}\| e^0
\|,
\end{equation}
when $T < (1/(\Gamma(1-\alpha)L))^{1/\alpha} $.
\end{theorem}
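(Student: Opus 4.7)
The plan is to turn the error equation (\ref{3.1}) into a recurrence with manifestly nonnegative memory coefficients, hit it with the discrete $L^2$ inner product to kill the diffusion term, and then close the estimate with a short induction. First I would reindex the two sums in (\ref{3.1}): writing $k=n-j$ in the first and $k=n+1-j$ in the second and collecting, the right-hand side becomes
\begin{equation*}
\epsilon_i^{n+1}-C_\alpha\,\delta_x^2\epsilon_i^{n+1}=b_n\,\epsilon_i^{0}+\sum_{k=1}^{n}(b_{n-k}-b_{n+1-k})\,\epsilon_i^{k}+C_\alpha\bigl(f(U_i^{n})-f(\widetilde U_i^{n})\bigr).
\end{equation*}
The crucial structural facts, both already recorded after (\ref{2.6}), are that every weight $b_{n-k}-b_{n+1-k}$ is strictly positive, and that they telescope to $\sum_{k=1}^{n}(b_{n-k}-b_{n+1-k})=b_0-b_n=1-b_n$.

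Next I would multiply by $h\,\epsilon_i^{n+1}$ and sum $i=1,\dots,M-1$ (for the Dirichlet case; the Neumann case is the analogous summation-by-parts). The vanishing boundary values from (\ref{3.01}) plus discrete integration by parts give $-C_\alpha\sum_i h\,\epsilon_i^{n+1}\delta_x^2\epsilon_i^{n+1}\ge 0$, so the left-hand side dominates $\|e^{n+1}\|^2$. Applying Cauchy--Schwarz term-by-term on the right, invoking the local Lipschitz hypothesis to bound $\sum_i h\bigl(f(U_i^n)-f(\widetilde U_i^n)\bigr)\epsilon_i^{n+1}\le L\|e^n\|\,\|e^{n+1}\|$, and dividing through by $\|e^{n+1}\|$ yields
\begin{equation*}
\|e^{n+1}\|\le b_n\|e^0\|+\sum_{k=1}^{n}(b_{n-k}-b_{n+1-k})\|e^k\|+C_\alpha L\,\|e^n\|.
\end{equation*}

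With this linear recurrence in hand I would induct on $n$, proposing the ansatz $\|e^k\|\le M\|e^0\|$ with the constant $M=1/\bigl((1-\alpha)-T^\alpha\Gamma(2-\alpha)L\bigr)$ from the theorem; the base case $k=0$ is immediate. Plugging the induction hypothesis into the estimate and using the telescoping identity collapses the right-hand side to $\bigl[b_n+(1-b_n+C_\alpha L)M\bigr]\|e^0\|$, so the induction closes provided $M(b_n-C_\alpha L)\ge b_n$, i.e.\ provided $b_n>C_\alpha L$ uniformly in $n$. Here the already-noted bound $b_n>(1-\alpha)(n+1)^{-\alpha}$, together with $(n+1)\tau\le T$, gives $b_n>(1-\alpha)\tau^\alpha T^{-\alpha}$, and then $b_n-C_\alpha L>\tau^\alpha\bigl[(1-\alpha)T^{-\alpha}-\Gamma(2-\alpha)L\bigr]$, which is positive precisely under the stated smallness condition $T<(\Gamma(1-\alpha)L)^{-1/\alpha}$; a short computation then verifies $M\ge b_n/(b_n-C_\alpha L)$ uniformly in $n$.

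The only delicate point is the last one: matching the coefficient in the theorem requires using the sharp lower bound for $b_n$ at the largest admissible $n$, and checking that the smallness assumption on $T$ is exactly what makes $b_n-C_\alpha L$ positive for every $n\le N-1$. Everything else (reindexing, summation by parts, Cauchy--Schwarz, induction) is routine once the recurrence is put in the telescoped form above.
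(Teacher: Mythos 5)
Your proposal follows essentially the same route as the paper: the same reindexing of the memory sum into the telescoping weights $b_j-b_{j+1}$, the same discrete $L^2$ energy argument to dispose of the diffusion term, and the same induction with the ansatz $\|e^k\|\le M\|e^0\|$, $M=1/\bigl((1-\alpha)-T^{\alpha}\Gamma(2-\alpha)L\bigr)$, closed via $b_n>C_\alpha L$ and the bound $b_n>(1-\alpha)(n+1)^{-\alpha}$ together with $(n+1)\tau\le T$. The only cosmetic difference is that you discard the diffusion term by summation by parts, whereas the paper bounds the off-diagonal part by $\tfrac{2C_\alpha}{h^2}\|e^{n+1}\|^2$ via Cauchy--Schwarz and cancels it against the left-hand side; these are equivalent.
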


\begin{proof}
We prove this theorem by mathematical induction. Taking $n=0$ in (\ref{3.1}), we
have
\begin{equation} \label{3.3}
\left(1+ \frac{2C_\alpha}{h^2} \right) \epsilon_i^1=
\epsilon_i^0+\frac{C_\alpha}{h^2}(\epsilon_{i+1}^1+\epsilon_{i-1}^1)+C_\alpha
\big( f(U_i^0)-f(\widetilde{U}_i^0) \big).
\end{equation}
Multiplying both sides of (\ref{3.3}) by $h\epsilon^{1}_i$ and
summing up, there exists
$$
\begin{array}{l}
\displaystyle\left(1+ \frac{2C_\alpha}{h^2} \right) \|e^1\|^2
\\
\\
\displaystyle=\sum\limits_{i=1}^{M-1}
(\epsilon_i^0\cdot\epsilon_i^1\cdot
h)+\frac{C_\alpha}{h^2}\sum\limits_{i=1}^{M-1}((\epsilon_{i+1}^1+\epsilon_{i-1}^1)\cdot\epsilon_i^1\cdot
h)+C_\alpha \sum\limits_{i=1}^{M-1} \big(
f(U_i^0)-f(\widetilde{U}_i^0) \big) \epsilon_i^1h
\\
\\
\displaystyle \le
\|e^0\|\cdot\|e^1\|+\frac{2C_\alpha}{h^2}\|e^1\|^2+C_\alpha \cdot L
\cdot \|e^0\|\cdot\|e^1\|.
\end{array}
$$
Then we obtain
$$
\|e^1\| \le (1+C_\alpha L) \|e^0\|,
$$
it can be easily checked that this means (\ref{3.2}) holds for
$e^1$. Now supposing (\ref{3.2}) holds for $e^1$, $e^2$, $\cdots$,
$e^n$, we prove
\begin{equation*}
\| e ^{n+1}
\|\leq\frac{1}{(1-\alpha)-T^{\alpha}\Gamma(2-\alpha)L}\|e ^0
\|.
\end{equation*}
Just as the above process, multiplying both sides of (\ref{3.1}) by
$h\epsilon^{n+1}_i$ and summing up, we get
$$
\begin{array}{l}
\displaystyle\left(1+\frac{2C_\alpha}{h^2} \right) \|e^{n+1}\|^2
\\
\\
\displaystyle=
\sum\limits_{j=0}^n\sum\limits_{i=1}^{M-1}b_j\epsilon_i^{n-j}
\epsilon_i^{n+1}
h-\sum\limits_{j=1}^n\sum\limits_{i=1}^{M-1}b_j\epsilon_i^{n+1-j}
\epsilon_i^{n+1} h
\\
\\
\displaystyle ~~~~
+\frac{C_\alpha}{h^2}\sum\limits_{i=1}^{M-1}(\epsilon_{i+1}^{n+1}+\epsilon_{i-1}^{n+1})\epsilon_i^{n+1}
h+C_\alpha \sum\limits_{i=1}^{M-1} \big(
f(U_i^n)-f(\widetilde{U}_i^n) \big) \epsilon_i^{n+1}h
\\
\\
\displaystyle=
\sum\limits_{j=0}^{n-1}\sum\limits_{i=1}^{M-1}(b_j-b_{j+1})\epsilon_i^{n-j}
\epsilon_i^{n+1} h+\sum\limits_{i=1}^{M-1}b_n\epsilon_i^0
\epsilon_i^{n+1} h
\\
\\
\displaystyle ~~~~
+\frac{C_\alpha}{h^2}\sum\limits_{i=1}^{M-1}(\epsilon_{i+1}^{n+1}+\epsilon_{i-1}^{n+1})\epsilon_i^{n+1}
h+C_\alpha \sum\limits_{i=1}^{M-1} \big(
f(U_i^n)-f(\widetilde{U}_i^n) \big) \epsilon_i^{n+1}h
\\
\\
\displaystyle \le \sum\limits_{j=0}^{n-1}
(b_j-b_{j+1})\|e^{n-j}\|\cdot\|e^{n+1}\|+
b_n\|e^0\|\cdot\|e^{n+1}\|+\frac{2C_\alpha}{h^2}\|e^{n+1}\|^2
\\
\\
\displaystyle~~~~ +C_\alpha \cdot L \cdot \|e^n\|\cdot\|e^{n+1}\|.
\end{array}
$$
Then, there exists
$$
\begin{array}{lll}
 \|e^{n+1}\| &\le & \displaystyle\left(\frac{1-b_n+C_\alpha
L}{(1-\alpha)-T^{\alpha}\Gamma(2-\alpha)L} +b_n \right) \|e^0 \| \\
\\
&\le &  \displaystyle
\frac{1}{(1-\alpha)-T^{\alpha}\Gamma(2-\alpha)L}\|e^0 \|.
\end{array}
$$
\end{proof}

\begin{theorem}
Let $u(x_{i},t_n)$ and $U_{i}^n$ be the exact solutions of the
subdiffusive reaction-diffusion equation (\ref{2.1})-(\ref{2.3})  and of the
numerical scheme (\ref{2.10})-(\ref{2.13}), respectively,
 define
$\varepsilon_{i}^{n}=u(x_{i},t_n)-U_{i}^n$ and
$E^{n}=(\varepsilon_{1}^{n},\varepsilon_{2}^{n},\cdots,\varepsilon_{M-1}^{n})^{T}$,
then $E^{n}$satisfies the following error estimate:
\begin{equation}\label{3.5}
\|E^{n}\|\leq C(\tau+h^2),
\end{equation}
when $T < (1/(\Gamma(1-\alpha)L))^{1/\alpha} $.
\end{theorem}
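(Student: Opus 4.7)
The plan is to mimic the structure of the stability proof (Theorem 3.1) but with a nonzero inhomogeneous truncation term on the right-hand side, and then close the induction by exploiting the smallness condition on $T$.

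First I would set up the local truncation error. Substituting the exact solution $u(x_i,t_n)$ into the scheme (\ref{2.10}) gives a residual
$$R_i^{n+1}=\left.\frac{\partial^\alpha u}{\partial t^\alpha}\right|_{(x_i,t_{n+1})}-D^\alpha_\tau u(x_i,t_{n+1})+\left.\frac{\partial^2 u}{\partial x^2}\right|_{(x_i,t_{n+1})}-\delta_x^2 u(x_i,t_{n+1})+f(u(x_i,t_{n+1}))-f(u(x_i,t_n)).$$
By Lemma~\ref{lemma2.1} the first difference is $O(\tau^{2-\alpha})$, the central difference contributes $O(h^2)$, and the semi-implicit lag in $f$ contributes $O(\tau)$ via the Lipschitz condition and time smoothness. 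Since $2-\alpha>1$, we may (for $\tau\le 1$) combine these to obtain $|R_i^{n+1}|\le \widetilde C(\tau+h^2)$ uniformly in $i,n$.

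Next I would write down the error equation for $\varepsilon_i^{n+1}$; it is identical to (\ref{3.1}) with the nonlinear contribution $f(u(x_i,t_n))-f(U_i^n)$ in place of $f(U_i^n)-f(\widetilde U_i^n)$, plus the extra inhomogeneous term $C_\alpha R_i^{n+1}$. Multiplying by $h\varepsilon_i^{n+1}$, summing over $i=1,\dots,M-1$, applying Cauchy--Schwarz, and cancelling the $(2C_\alpha/h^2)\|E^{n+1}\|^2$ terms exactly as in Theorem~\ref{Th3.1}, I would arrive at
$$\|E^{n+1}\|\le \sum_{j=0}^{n-1}(b_j-b_{j+1})\|E^{n-j}\|+b_n\|E^0\|+C_\alpha L\,\|E^n\|+C_\alpha\widetilde C(\tau+h^2).$$
Since $U^0_i=g(x_i)=u(x_i,0)$, the term $\|E^0\|$ vanishes. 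I would then prove $\|E^n\|\le K(\tau+h^2)$ by induction on $n$, with the constant
$$K=\frac{\widetilde C\,T^\alpha\Gamma(2-\alpha)}{(1-\alpha)-T^\alpha\Gamma(2-\alpha)L}.$$
The inductive step uses the telescoping identity $\sum_{j=0}^{n-1}(b_j-b_{j+1})=1-b_n$, reducing the bound to $\|E^{n+1}\|\le (1-b_n+C_\alpha L)K(\tau+h^2)+C_\alpha\widetilde C(\tau+h^2)$, so that $\|E^{n+1}\|\le K(\tau+h^2)$ whenever $C_\alpha\widetilde C\le(b_n-C_\alpha L)K$.

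The main obstacle is checking that the constant $K$ is \emph{uniform in} $n,\tau,h$, i.e.\ that $b_n-C_\alpha L$ does not degenerate. Using $b_n>(1-\alpha)(n+1)^{-\alpha}$ together with $(n+1)\tau\le T$ gives $b_n\ge (1-\alpha)\tau^\alpha/T^\alpha$, so $b_n-C_\alpha L\ge\Gamma(2-\alpha)\tau^\alpha\bigl[(1-\alpha)/(T^\alpha\Gamma(2-\alpha))-L\bigr]$, which is strictly positive precisely under the hypothesis $T<(1/(\Gamma(1-\alpha)L))^{1/\alpha}$. This yields exactly the $K$ above, independent of $n$, and hence $\|E^n\|\le C(\tau+h^2)$ with $C=K$.
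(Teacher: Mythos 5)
Your proposal is correct and follows essentially the same route as the paper: the same truncation-error decomposition via Lemma~\ref{lemma2.1}, the central difference, and the $O(\tau)$ lag in $f$; the same weighted-sum error equation and energy estimate; and an induction closed by the lower bound $b_n>(1-\alpha)(n+1)^{-\alpha}$ together with the smallness condition on $T$. The only (cosmetic) difference is that you carry a fixed constant $K$ through the induction and verify its uniformity up front, whereas the paper's inductive hypothesis carries the growing factor $b_{m-1}^{-1}$ and bounds it by $T^\alpha\Gamma(1-\alpha)/C_\alpha$ only at the end.
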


\begin{proof}
According to equation (\ref{2.8}), we know
\begin{equation}
D^{\alpha}_{\tau}U_i^{n+1}=\delta_x^2 U_i^{n+1}+f(U_i^{n})\label{3.6}.
\end{equation}
By Lemma $2.1$, there exists a positive constant $C$, such that
\begin{equation}
\left|\frac{\partial^\alpha u(x_i,t)}{\partial t^\alpha}|_{t=t_{n+1}}
-D^{\alpha}_{\tau} u_i^{n+1} \right|\leq C\tau ^{2-\alpha},\label{r1}
\end{equation}
and it is well known that
\begin{equation}
\left|\frac{\partial^2 u(x_i,t_{n+1})}{\partial x^2}-\delta_x^2 u_i^{n+1}\right|\leq Ch^2.\label{r2}
\end{equation}
Thanks to the Lipschitz continuity of $f$ with respect to $u$, we have
\begin{equation}
\left|f(u_i^{n+1})-f (u_i^{n}) \right|\leq L\left|u_i^{n+1}-u_i^{n}\right| \leq C\tau.\label{r3}
\end{equation}
Based on (\ref{r1})-(\ref{r3}), there exists
$$
D^{\alpha}_{\tau}u_i^{n+1}=\delta_x^2 u^{n+1}_{i}+f(u^{n}_{i})+R_i^{n+1},
$$
where $R_i^{n}=O(\tau+h^2)$.
For convenience, denoting $C_\alpha R_i^{n+1}$ as $r_i^{n+1}$, we get
\begin{equation}\label{3.10}
\begin{array}{lll}
\displaystyle\left(1+ \frac{2C_\alpha}{h^2} \right)\varepsilon_i^{n+1}= & \displaystyle \sum\limits_{j=0}^n b_j
\varepsilon_i^{n-j}-\sum\limits_{j=1}^n b_j\varepsilon_i^{n+1-j}
+\frac{C_\alpha}{h^2}\left(\varepsilon_{i+1}^{n+1}+\varepsilon_{i-1}^{n+1}\right)
\\
\\
\displaystyle~~~~&+C_\alpha\big( f(u(x_{i},t_{n}))-f(U_{i}^{n}) \big)+ r_i^{n+1}.
\end{array}
\end{equation}
%
Similar to the proof of Theorem \ref{Th3.1}, taking $n=0$ in (\ref{3.10}), multiplying both sides of (\ref{3.10}) by $h\varepsilon^{1}_i$,
summing up and directly calculating, there exists
\begin{align}
||E^{1}||^2\leq b_0 ||E^{0}||\cdot||E^1||+C_{\alpha} L||E^{0}||\cdot||E^1||+||r^{1}||\cdot||E^1||, \nonumber
\end{align}
where $r^{1}=(r^{1}_1,\cdots,r^{1}_{M-1})$.
Then 
$$
\begin{array}{lll}
||E^{1}|| &\leq &\displaystyle(1+C_{\alpha} L)|| E^{0}||+|| r^{1}||
\\
\\
&\leq &\displaystyle\frac{b_0^{-1}}{(1-\alpha)-T^{\alpha}\Gamma(2-\alpha)L}(|| E^{0}||+|| r^{1}||).
\end{array}
$$
Now suppose that
\begin{equation}\label{t3.2}
 \|E^{m}\| \le  \displaystyle
 \frac{b_{m-1}^{-1}}{(1-\alpha)-T^{\alpha}\Gamma(2-\alpha)L}(\|E^0 \|+\max_{1\leq j\leq m}|| r^{j}||),
\end{equation}
holds for $m=1,2,\cdots,n$, then we prove that it holds for $E^{n+1}$. Similarly,
$$
\begin{array}{l}
\displaystyle\left(1+ \frac{2C_\alpha}{h^2} \right)||E^{n+1}||^2 \\
\\
\displaystyle
=\sum_{i=1}^{M-1}\sum_{j=0}^n b_j\varepsilon^{n-j}_i\varepsilon^{n+1}_ih-\sum_{i=1}^{M-1}\sum_{j=1}^n b_j\varepsilon^{n+1-j}_i \varepsilon^{n+1}_ih
\\
\\
\displaystyle~~~ +\sum_{i=1}^{M-1}\frac{C_\alpha}{h^2}(\varepsilon_{i+1}^{n+1}+\varepsilon_{i-1}^{n+1})\varepsilon_i^{n+1}
h+ C_{\alpha}\sum_{i=1}^{M-1} \left( f\left( u(x_{i},t_{n}) \right) -f( U_{i}^{n}) \right)\varepsilon^{n+1}_i h
\\
\\
\displaystyle~~~
+\sum_{i=1}^{M-1}r^{n+1}_i \varepsilon^{n+1}_ih
 \\
\\
\displaystyle\leq \sum_{j=0}^{n-1}(b_j-b_{j+1})(||E^{n-j}|| \cdot||E^{n+1}||) +b_n ||E^{n+1}||\cdot ||E^0||
\\
\\
\displaystyle ~~~~
+C_{\alpha} L||E^n||\cdot||E^{n+1}||+ ||r^{n+1}||\cdot||E^{n+1}||.
\end{array}
$$
Dividing by $||E^{n+1}||$ at both sides, we conclude that
$$
\begin{array}{l}
\displaystyle||E^{n+1}||\displaystyle
 \leq \sum_{j=0}^{n-1}(b_j-b_{j+1})||E^{n-j}||  +b_n  ||E^0||+C_{\alpha} L||E^{n}||+||r^{n+1}||.
\end{array}
$$
According to (\ref{t3.2}), combining with $b_{j}^{-1}<b_{j+1}^{-1}$, we have
$$
\begin{array}{l}
\displaystyle||E^{n+1}||
\\
\\
\displaystyle\leq
\sum_{j=0}^{n-1}\frac{(b_j-b_{j+1})b_{n-j-1}^{-1}}{(1-\alpha)-T^{\alpha}\Gamma(2-\alpha)L}(\|E^0 \|+\max_{1\leq j\leq {n-1}}|| r^{j}||)
\\
\\
\displaystyle~~~~
+b_n  ||E^0||+\frac{C_{\alpha} Lb_{n-1}^{-1}}{(1-\alpha)-T^{\alpha}\Gamma(2-\alpha)L}(\|E^0 \|+\max_{1\leq j\leq {n}}|| r^{j}||)+|| r^{n+1}||
\\
\\
\displaystyle\leq
\frac{b_{n}^{-1}}{(1-\alpha)-T^{\alpha}\Gamma(2-\alpha)L}(\|E^0 \|+  \max_{1\leq j\leq {n+1}}|| r^{j}||).
\end{array}
$$
Notice that
$$
E^0_i=0, ~E_0^j=E_M^j=0,~~~~0\leq i\leq M,1\leq j\leq n+1.
$$
Together with
$b_n^{-1}\leq\frac{(n+1)^{\alpha}}{1-\alpha}$ and $(n+1)\tau\leq T$, we
obtain
$$
\begin{array}{lll}
||E^{n+1}|| &\leq&  \displaystyle\frac{b_{n}^{-1}}{(1-\alpha)-T^{\alpha}\Gamma(2-\alpha)L}    \max\limits_{1\leq j\leq {n+1}}|| r^j||
\\
\\
&\leq&\displaystyle  \frac{b_{n}^{-1}C_{\alpha} }{(1-\alpha)-T^{\alpha}\Gamma(2-\alpha)L}   \max_{1\leq j\leq {n+1}}|| R^{j}||
\\
\\
&\leq& \displaystyle \frac{T^\alpha \Gamma(1-\alpha) }{(1-\alpha)-T^{\alpha}\Gamma(2-\alpha)L}   \max_{1\leq j\leq {n+1}}|| R^{j}||
\\
\\
&\leq& \displaystyle C(\tau+h^2) .
\end{array}
$$
\end{proof}

\begin{remark} The above analysis focuses on scalar equation, but it can be easily extended to the vector one. Take (\ref{Eq1.1}) as an example, and denote the nonlinear term of the first equation as $f(N,P)$ and the nonlinear term of the second equation as $g(N,P)$.
Denoting $\rho^n=N(x_i,t_n)-N_i^n$ and $\eta^n=P(x_i,t_n)-P_i^n$, and using the following simple trick
$$
\begin{array}{l}
\displaystyle
|f(N_i^{n+1},P_i^{n+1})-f(N_i^{n},P_i^{n})|\\\\
\displaystyle =|f(N_i^{n+1},P_i^{n+1})-f(N_i^{n+1},P_i^{n})+f(N_i^{n+1},P_i^{n})-f(N_i^{n},P_i^{n})|\\\\
\displaystyle \leq|f(N_i^{n+1},P_i^{n+1})-f(N_i^{n+1},P_i^{n})|+|f(N_i^{n+1},P_i^{n})-f(N_i^{n},P_i^{n})|\\\\
\displaystyle \leq L(|P_i^{n+1}-P_i^{n}|+|N_i^{n+1}-N_i^{n}|),
\end{array}
$$
lead to 
$$
 \|\rho^{m}\| \le  \displaystyle
 \frac{b_{m-1}^{-1}}{(1-\alpha)-T^{\alpha}\Gamma(2-\alpha)L} (\|\rho^0\|+\| \eta^0\|+\max_{1\leq j\leq m}|| r^{j}||),
$$
and
$$
 \|\eta^{m}\| \le  \displaystyle
  \frac{b_{m-1}^{-1}}{(1-\alpha)-T^{\alpha}\Gamma(2-\alpha)L} (\|\rho^0\|+\| \eta^0\|+\max_{1\leq j\leq m}|| r^{j}||),
$$
for $m=1,2,\cdots,n$, by the analysis similar to Theorem $3.2$.

\end{remark}

\begin{remark}
The above analysis is for $L^2$ estimates, and we hope that the $H^1$ estimates can be obtained in the near future, but it needs more delicate tricks to dealing with the nonlinear terms. In fact, there are already the $H^1$ estimates for the linear equations \cite{Gao2011}.
\end{remark}

\section{Positivity and boundedness of the analytical and numerical solutions of the subdiffusive predator-prey model}
 In this section, we first prove that the analytical solutions of (\ref{Eq1.1})-(\ref{Eq1.2}) are positive and bounded, then demonstrate that both the numerical schemes (\ref{2.10})-(\ref{2.12}) with (\ref{2.13}) and with ($2.13^\prime$) preserve their positivity and boundedness when utilized to numerically solve (\ref{Eq1.1})-(\ref{Eq1.2}) and (\ref{Eq1.1}) and ($1.2^\prime$), respectively.

\subsection{Maximum principle for analytical solutions}
 For analyzing the properties of the analytical solutions, we introduce the following maximum principle.
The considered equation is
 \begin{equation}\label{4.1}
  Lu=\frac{\partial^\alpha u}{\partial t^\alpha}-\frac{\partial^2 u}{\partial x^2}+c(x,t)u=f(x,t), ~~~~ (x,t) \in \Omega_T=\Omega \times (0,T],
 \end{equation}
 where $\Omega_T$ is a bounded domain with Lipschitz continuous boundary.

\begin{theorem} \label{Th4.1}
Assuming that the coefficient $c(x,t)\ge 0$ and $f(x,t)\leq 0$ (resp. $f(x,t)\geq 0$) in $\Omega_T$ and $u\in C^{2,1}(\Omega_T)\bigcap C(\overline{\Omega}_T)$ is the solution of (\ref{4.1}), then the non-negative maximum (resp. non-positive minimum) value of $u(x,t)$ in $\Omega_T$ (if exists) must reach at the parabolic boundary $\Gamma_T$, namely
\begin{equation} \label{4.2}
\max_{\overline{\Omega}_T}u(x,t)\leq \max_{\Gamma{_T}} \lbrace u(x,t),0\rbrace\,\,\,\,\,\, ({\rm resp.}\,\,\min_{\overline{\Omega}_T}u(x,t)\geq \min_{\Gamma{_T}} \lbrace u(x,t),0\rbrace).
\end{equation}
\end{theorem}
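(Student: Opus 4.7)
The plan is a contradiction argument combined with a standard perturbation trick that upgrades a non-strict inequality to a strict one. I focus on the maximum statement; the minimum claim follows by replacing $u$ with $-u$, which sends $f$ to $-f$ and leaves the sign condition on $c$ intact. The key auxiliary fact is a \emph{fractional extremum principle}: if $w(x,\cdot)\in C^{1}[0,t_0]$ attains its maximum over $[0,t_0]$ at $t_0>0$, then $(\partial^\alpha_t w)(x,t_0)\ge 0$. I would derive this by introducing $v(s):=w(x,t_0)-w(x,s)\ge 0$, writing
\begin{equation*}
\frac{\partial^\alpha w}{\partial t^\alpha}(x,t_0)
= -\frac{1}{\Gamma(1-\alpha)}\int_0^{t_0}\frac{v'(s)}{(t_0-s)^\alpha}\,ds,
\end{equation*}
and integrating by parts; the boundary contribution at $s=t_0$ vanishes because $v(s)=O(t_0-s)$ and $\alpha<1$, leaving
\begin{equation*}
\frac{\partial^\alpha w}{\partial t^\alpha}(x,t_0)
= \frac{1}{\Gamma(1-\alpha)}\left[\frac{v(0)}{t_0^{\alpha}}+\alpha\int_0^{t_0}\frac{v(s)}{(t_0-s)^{\alpha+1}}\,ds\right]\ge 0.
\end{equation*}

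With this lemma in hand, for each $\epsilon>0$ I set $v_\epsilon(x,t):=u(x,t)-\epsilon\,t^{\alpha}$. Using the identity $\partial^\alpha_t t^\alpha=\Gamma(1+\alpha)$, linearity of $L$ gives
\begin{equation*}
L v_\epsilon = f-\epsilon\Gamma(1+\alpha)-c\,\epsilon\,t^{\alpha}\le -\epsilon\Gamma(1+\alpha)<0\quad\text{on }\Omega_T,
\end{equation*}
since $f\le 0$, $c\ge 0$ and $t\ge 0$. Suppose $v_\epsilon$ attains a non-negative maximum at an interior point $(x_*,t_*)\in\Omega_T\setminus\Gamma_T$. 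Then $\partial_x^2 v_\epsilon(x_*,t_*)\le 0$ from spatial local-maximum calculus, the fractional extremum principle yields $\partial^\alpha_t v_\epsilon(x_*,t_*)\ge 0$, and $c(x_*,t_*)v_\epsilon(x_*,t_*)\ge 0$ by sign. Summing these gives $Lv_\epsilon(x_*,t_*)\ge 0$, contradicting $Lv_\epsilon<0$. Hence any non-negative maximum of $v_\epsilon$ is attained on $\Gamma_T$, so $\max_{\overline\Omega_T} v_\epsilon\le \max\{\max_{\Gamma_T} v_\epsilon,\,0\}\le \max\{\max_{\Gamma_T} u,\,0\}$.

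Since $u=v_\epsilon+\epsilon\,t^\alpha\le v_\epsilon+\epsilon T^\alpha$, I conclude $\max_{\overline\Omega_T} u\le \max\{\max_{\Gamma_T} u,\,0\}+\epsilon T^\alpha$, and sending $\epsilon\to 0^+$ yields (\ref{4.2}). The principal obstacle is the fractional extremum principle itself: the Caputo derivative is non-local in time, so the classical observation ``the first derivative vanishes at an interior maximum'' has no direct analogue, and the sign of $\partial^\alpha_t u$ at a maximum must be extracted from the full history integral. The integration-by-parts computation above, with careful treatment of the $s\to t_0^-$ endpoint singularity, is precisely where the regularity hypothesis $u\in C^{2,1}(\Omega_T)\cap C(\overline\Omega_T)$ is genuinely exploited.
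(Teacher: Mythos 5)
Your proof is correct, and while the overall template (contradiction plus a vanishing perturbation) matches the paper's, the decisive technical step is handled by a genuinely different device. The paper never invokes the integration-by-parts (Luchko-type) extremum lemma you prove; instead it fixes the interior non-negative maximum $(x^*,t^*)$ of $u$ itself, writes the Caputo derivative as the limit of the discrete L1 sum with coefficients $b_j=(j+1)^{1-\alpha}-j^{1-\alpha}$, and uses $b_n>(1-\alpha)(n+1)^{-\alpha}$ to extract the \emph{quantitative} lower bound $\partial^\alpha_t u(x^*,t^*)\ge \frac{(1-\alpha)T^{-\alpha}}{\Gamma(2-\alpha)}\,m^*$ with $m^*=u(x^*,t^*)-u(x^*,0)>0$; it then subtracts $\varepsilon e^{bt}$ with $\varepsilon$ chosen small \emph{depending on $m^*$} so that the perturbed function still has non-negative Caputo derivative at that same point. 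Your route reverses the order: you perturb first by $\epsilon t^\alpha$ (whose Caputo derivative is the constant $\Gamma(1+\alpha)$, making $Lv_\epsilon\le-\epsilon\Gamma(1+\alpha)$ strictly negative uniformly), locate the maximum of the perturbed function, and apply the qualitative inequality $\partial^\alpha_t v_\epsilon\ge 0$ there. This buys a cleaner logical structure — no $\varepsilon$ depending on the unknown maximum point, no Mittag--Leffler function, and the sign of the zeroth-order term $c\,v_\epsilon\ge 0$ is handled explicitly (the paper is silent on this term after perturbing, which is a small gap your version avoids). What the paper's discrete-limit computation buys in exchange is a strictly positive lower bound on the Caputo derivative at a maximum, which is occasionally useful elsewhere but is not needed here. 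One point to make explicit in your write-up: $v_\epsilon(x_*,\cdot)$ fails to be $C^1$ at $t=0$ because $(t^\alpha)'\sim\alpha t^{\alpha-1}$, so the integration by parts in your lemma should be performed on $[\delta,t_0]$ and passed to the limit $\delta\to 0^+$ (the boundary contribution tends to $v(0)t_0^{-\alpha}$ and the derivative is integrable), after which everything you wrote stands.
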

In fact, if the non-negative maximum value of $u(x,t)$ is not at the boundary $\Gamma{_T}$ and $f(x,t) \leq 0$, then there exists a point $(x^*,t^*)\in \Omega_T$ such that
\[u(x^*,t^*)> \max_{\Gamma{ _T}}\lbrace u(x,t),0\rbrace~~{\rm and }~~ u(x^*,t^*) \ge \max_{\overline{\Omega}_T} u(x,t).\]
Let $b>0$, for any $\varepsilon >0$, we introduce the auxiliary function
\[v(x,t)=u(x,t)-\varepsilon e^{bt}.\]
On the one hand, we know that, for any $(x,t) \in \Omega_T$, $v(x,t)$ satisfies
\begin{equation}\label{t12}
\begin{array}{lll}
\displaystyle
\frac{\partial^{\alpha}v}{\partial t^{\alpha}}-\frac{\partial^2 v}{\partial x^2}+c(x,t)v
& =& \displaystyle\frac{\partial^{\alpha}u}{\partial t^{\alpha}}-\frac{\partial^2 u}{\partial x^2}+c(x,t)u-\varepsilon e^{bt}c(x,t)\\
\\
&=& f(x,t)-\varepsilon (bt^{1-\alpha}E_{1,2-\alpha}(bt)+ e^{bt}c(x,t))<0,
\end{array}
\end{equation}
where $E_{\alpha,\beta}(z)$ is the Mittag-Leffler function. 
At the maximum point $(x^*,t^*)$, according to the definition of Caputo derivative, we have
\begin{align}
\frac{\partial^{\alpha}u(x^*,t)}{\partial t^{\alpha}}|_{t=t^*}=&\frac{1}{\Gamma(1-\alpha)} \int^{t^*}_0 \frac{\partial u(x^*,s)}{\partial s}\frac{1}{(t^* - s)^{\alpha}}ds \nonumber\\
 =&\lim_{\tau\rightarrow 0}\frac{\tau^{1-\alpha}}{\Gamma(2-\alpha)}\sum^{n-1}_{j=0}b_j\frac{1}{\tau}(u(x^*,t^*-j\tau)-u(x^*,t^*-(j+1)\tau))  ,\nonumber \\
 =&\lim_{\tau\rightarrow 0}\frac{\tau^{-\alpha}}{\Gamma(2-\alpha)}(u(x^*,t^*)-(1-b_1)u(x^*,t^*-\tau)-\cdots-b_nu(x'_0,0))\nonumber\\
=& \lim_{\tau\rightarrow 0}\frac{\tau^{-\alpha}}{\Gamma(2-\alpha)}((1-b_1)(u(x^*,t^*)-u(x^*,t^*-\tau))+\cdots\nonumber\\
&+b_n(u(x^*,t^*)-u(x^*,0)))\nonumber\\
\geq& \lim_{\tau\rightarrow 0}\frac{\tau^{-\alpha}}{\Gamma(2-\alpha)}b_n  (u(x^*,t^*)-u(x^*,0) ) \nonumber\\
>& \lim_{\tau\rightarrow 0}\frac{(1-\alpha)(n+1)^{-\alpha}\tau^{-\alpha}}{\Gamma(2-\alpha)}  (u(x^*,t^*)-u(x^*,0)) \nonumber\\
\geq& \frac{(1-\alpha)T^{-\alpha}}{\Gamma(2-\alpha)} (u(x^*,t^*)-u(x^*,0)), \nonumber
\end{align}
where $\tau=t^*/n,\,b_j$ is defined in $(\ref{2.6})$, and $1=b_0>b_1>\cdots>b_n >
(1-\alpha)(n+1)^{-\alpha}$ is used. Since  $u(x^*,t^*)> u(x^*,0)$, denoting $m^*=u(x^*,t^*)-u(x^*,0)$, there exists
$$
\displaystyle\frac{\partial^{\alpha}v(x^*,t)}{\partial t^{\alpha}}|_{t=t^*}=\frac{\partial^{\alpha}u(x^*,t^*)}{\partial t^{\alpha}}
-\varepsilon b(t^*)^{1-\alpha}E_{1,2-\alpha}(bt^*)\geq0,
$$
when $\varepsilon\leq \frac{(1-\alpha)T^{-\alpha}m^*}{\Gamma(2-\alpha)b(t^*)^{1-\alpha}E_{1,2-\alpha}(bt^*)}$.
Together with
$\frac{\partial^2 v}{\partial x^2}=\frac{\partial^2 u}{\partial x^2} \le 0$ at $(x^*,t^*)$, 
  we know
\[ \frac{\partial^{\alpha}v}{\partial t^{\alpha}}-\frac{\partial^2 v}{\partial x^2}+c(x,t)v\geq 0 \,\,\,\,\,\, at ~~~ (x^*,t^*),\]
which is contradictory with (\ref{t12}).
Similar analysis can be done for the case $f(x,t) \geq 0$. So, from the above analysis, we arrive at Theorem \ref{Th4.1}.



\begin{remark}\label{Remark4.1}
If we take the initial condition of (\ref{4.1}) as $u(x,0)=0$, and its boundary conditions are Dirichlet's and homogeneous, then from Theorem \ref{Th4.1} we have $u(x,t) \leq 0$ when $f(x,t) \leq 0$ and $u(x,t) \geq 0$ when $f(x,t) \geq 0$. The same results still hold if the homogeneous Neummann boundary conditions are used, since the maximum (minimum) value of $u(x,t)$ at the boundary is non-positive (nonnegative) under the homogeneous Neummann boundary conditions. In fact, if the maximum value of $u(x,t)$ at the boundary is positive, suppose it is located at the left boundary (similar analysis can be done if at the right boundary) and denote the one closest to the line $t=0$ by $u(x_l,t^*)$, then for any given sufficiently small $\varepsilon$, there exists $\xi_\varepsilon \in (x_l,x_l+\varepsilon)$ such that $u(x_l+\varepsilon,t^*)-u(x_l,t^*)=\big(\frac{\partial^2 u(x, t^*)}{\partial x^2}|_{x=\xi_\varepsilon}\big) \varepsilon^2<0$ since $\frac{\partial u(x, t^*)}{\partial x}|_{x=x_l}=0$.  So there exists sufficiently big $M>2$ and small $\delta t>0$  such that $\frac{\partial^2 u(x, t)}{\partial x^2}<0$ for any $(x,t) \in \Omega_\varepsilon^*=\{(x,t) \,| x_l<\xi_\varepsilon-\frac{\varepsilon}{M}<x<\xi_\varepsilon+\frac{\varepsilon}{M}<x_l+\varepsilon;\,  t^*-\delta t<t<t^* \}$. Now consider (\ref{4.1}) in the domain $\Omega^*=\{(x,t)\,| \xi_\varepsilon<x<x_l+\varepsilon;\, 0<t<t^*\}$, obviously the maximum value of $u(x,t)$ still is obtained at the parabolic boundary $\Gamma_{t^*}$ of $\Omega^*$. Furthermore, if taking $\varepsilon$ small enough, then the maximum value is located in the domain $\Gamma_{t^*} \cap \Omega_\varepsilon^*$. Now at the maximum point, $\frac{\partial^{\alpha}u }{\partial t^{\alpha}}-\frac{\partial^2 u }{\partial x^2}+c u > 0$, a contradiction is reached.

\end{remark}

\subsection{Positiveness and boundedness of the analytical solutions}
Using the upper and lower solutions method, we prove the positiveness and boundedness of the analytical solutions (\ref{Eq1.1})-(\ref{Eq1.2}) and (\ref{Eq1.1}) and ($1.2^\prime$). First, we introduce the definition of the upper and lower solutions.

\begin{definition}
For the system of equations ($i=1,2$)
\begin{align}
\frac{\partial^{\alpha}u_i}{\partial t^{\alpha}}-\frac{\partial^2 u_i}{\partial x^2}&= f_i(u_1,u_2),\ \,x\in\Omega,\,t\in(0,T],\nonumber\\
Bu_i\Big(\,u_i\, {\rm or} \, \frac{\partial u_i}{\partial x}\Big) &= g_i(x,t),\quad\ \,x\in\partial\Omega,\,t\in(0,T],\label{4.3} \\
u_i(x,0)& =\varphi_i(x),\qquad x\in\Omega,\nonumber
\end{align}
suppose that $\tilde{u}_i(x,t)$ and $\utilde{u}_i(x,t)$ satisfy
\begin{align}
&B \tilde{u}_i  -g_i(x,t)\geq 0\geq   B\utilde{u}_i -g_i(x,t),\ \,x\in\partial\Omega,\,t\in(0,T],\label{4.4.1}\\
&\tilde{u}_i(x,0)-\varphi_i(x)\geq 0 \geq\utilde{u}_i(x,0)-\varphi_i(x),\quad \ x\in\bar{\Omega}, \label{4.4.2}
\end{align}
and $f_1(\cdot,\cdot)$ is quasi-monotone decreasing, $f_2(\cdot,\cdot)$ is quasi-monotone increasing, and
\begin{align}
\frac{\partial^{\alpha}\tilde{u}_1}{\partial t^{\alpha}}&-\frac{\partial^2 \tilde{u}_1}{\partial x^2}-f_1(\tilde{u}_1,\utilde{u}_2)\geq 0\geq \frac{\partial^{\alpha}\utilde{u}_1}{\partial t^{\alpha}}-\frac{\partial^2 \utilde{u}_1}{\partial x^2}-f_1(\utilde{u}_1,\tilde{u}_2),\label{4.4.3}\\
\frac{\partial^{\alpha}\tilde{u}_2}{\partial t^{\alpha}}&-\frac{\partial^2 \tilde{u}_2}{\partial x^2}-f_2(\tilde{u}_1,\tilde{u}_2)\geq 0\geq \frac{\partial^{\alpha}\utilde{u}_2}{\partial t^{\alpha}}-\frac{\partial^2 \utilde{u}_2}{\partial x^2}-f_2(\utilde{u}_1,\utilde{u}_2),\label{4.4.4}
\end{align}
then $U(x,t)=(\tilde{u}_1(x,t),\tilde{u}_2(x,t))$ and $V(x,t)=(\utilde{u}_1(x,t),\utilde{u}_2(x,t))$ are respectively called upper solution and lower solution of the system (\ref{4.3}).
\end{definition}


\begin{theorem}\label{Th4.2}
Suppose $\{f_1,f_2\}$ is mixed quasi-monotonous and Lipschitz continuous with respect to $u_1$ and $u_2$
$$
|f_i(u_1,u_2)-f_i(v_1,v_2)|\leq L(|u_1-v_1|+|u_2-v_2|),
$$
where $L$ is constant.
 If the upper and lower solutions,
$U(x,t)$ and $V(x,t)$, satisfy $V(x,t)\leq U(x,t)$, then (\ref{4.3}) has a unique solution in $[V(x,t), U(x,t)]$.
\end{theorem}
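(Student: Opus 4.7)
The plan is to build two monotone iteration sequences that start from $U$ and $V$ and converge to a common solution which is the unique one in $[V,U]$. The first step is to shift the operator: choose a constant $K\geq L$ and introduce
$$
L_K u := \frac{\partial^\alpha u}{\partial t^\alpha} - \frac{\partial^2 u}{\partial x^2} + K u,
$$
so that (\ref{4.3}) becomes $L_K u_i = f_i(u_1,u_2)+K u_i$. The Lipschitz bound ensures the new right-hand side is monotone in the $i$th component, and together with the mixed quasi-monotone structure (with $f_1$ decreasing in $u_2$ and $f_2$ increasing in $u_1$) this makes the decoupled linear iteration below well posed.

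With $\overline{u}_i^{(0)}=\tilde{u}_i$ and $\underline{u}_i^{(0)}=\utilde{u}_i$, I would define iterates by
\begin{align*}
L_K \overline{u}_1^{(k+1)} &= f_1\bigl(\overline{u}_1^{(k)},\underline{u}_2^{(k)}\bigr) + K\,\overline{u}_1^{(k)},\\
L_K \overline{u}_2^{(k+1)} &= f_2\bigl(\overline{u}_1^{(k)},\overline{u}_2^{(k)}\bigr) + K\,\overline{u}_2^{(k)},
\end{align*}
and analogously for $\underline{u}_i^{(k+1)}$ with the roles of upper and lower arguments interchanged in the $f_i$, each iterate carrying the same initial and boundary data as (\ref{4.3}). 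Each step is a linear fractional-parabolic problem, whose existence and uniqueness follow from standard theory (and whose comparison principle is supplied by Theorem \ref{Th4.1} applied to $L_K$, which has a non-negative zeroth-order coefficient).

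The second step is to verify monotonicity by induction. Using (\ref{4.4.1})--(\ref{4.4.4}) at the base case and the quasi-monotone structure at the inductive step, the differences $\overline{u}_i^{(k)}-\overline{u}_i^{(k+1)}$ and $\underline{u}_i^{(k+1)}-\underline{u}_i^{(k)}$ satisfy $L_K w \geq 0$ with non-negative initial and boundary data, so Theorem \ref{Th4.1} together with Remark \ref{Remark4.1} yields $w\geq 0$. A similar pairwise comparison shows $\underline{u}_i^{(k)}\leq \overline{u}_i^{(k)}$, producing the nested chain
$$
\utilde{u}_i \leq \underline{u}_i^{(1)} \leq \cdots \leq \underline{u}_i^{(k)} \leq \overline{u}_i^{(k)} \leq \cdots \leq \overline{u}_i^{(1)} \leq \tilde{u}_i.
$$
Hence both sequences converge pointwise and monotonically to limits $\overline{u}_i^{\ast}$ and $\underline{u}_i^{\ast}$ in $[V,U]$. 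A uniform-bound plus compactness argument (using $L^\infty$ and fractional-parabolic regularity for the linear equations) then allows passing to the limit in the recurrence and shows that each of $\overline{u}^{\ast}$ and $\underline{u}^{\ast}$ is a solution of (\ref{4.3}).

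For uniqueness in $[V,U]$, suppose $u$ and $v$ are two solutions and let $w_i=u_i-v_i$. Then $w_i$ vanishes initially and on the boundary and satisfies
$$
\frac{\partial^\alpha w_i}{\partial t^\alpha} - \frac{\partial^2 w_i}{\partial x^2} \leq L\bigl(|w_1|+|w_2|\bigr).
$$
Multiplying by $w_i$, integrating over $\Omega$, using integration by parts on the diffusion term and the Caputo coercivity inequality $\int_\Omega w_i\,\partial_t^\alpha w_i\,dx \geq \tfrac{1}{2}\partial_t^\alpha\|w_i\|^2$, and summing in $i$ yields an inequality of the form $\partial_t^\alpha \phi(t) \leq C\phi(t)$ with $\phi(0)=0$, forcing $\phi\equiv 0$ via a fractional (Mittag--Leffler-type) Gronwall inequality. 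The hard part will be precisely this uniqueness step: unlike the classical parabolic case, the fractional coercivity inequality is not an identity and requires a careful estimate, and the standard exponential Gronwall must be replaced by its Mittag--Leffler analogue. Verifying that the monotonicity step stays within the interior-maximum framework of Theorem \ref{Th4.1} (rather than attaining the extremum at the parabolic boundary in a degenerate way) is the other subtle point, but is handled just as in Remark \ref{Remark4.1}.
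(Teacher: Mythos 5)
Your overall architecture --- the monotone iteration from the upper/lower solutions, the shifted operator with a nonnegative zeroth-order coefficient so that Theorem \ref{Th4.1} supplies the comparison principle, and the nested chain $\utilde{u}_i\leq\underline{u}_i^{(k)}\leq\overline{u}_i^{(k)}\leq\tilde{u}_i$ --- is the same as the paper's Appendix A. But there is a genuine gap at the point where you claim that ``each of $\overline{u}^{\ast}$ and $\underline{u}^{\ast}$ is a solution of (\ref{4.3}).'' Because the system is \emph{mixed} quasi-monotone, passing to the limit in your recurrence gives the \emph{crossed} system
$$
\frac{\partial^{\alpha}\overline{u}_1^{\ast}}{\partial t^{\alpha}}-\frac{\partial^2 \overline{u}_1^{\ast}}{\partial x^2}=f_1\bigl(\overline{u}_1^{\ast},\underline{u}_2^{\ast}\bigr),\qquad
\frac{\partial^{\alpha}\underline{u}_1^{\ast}}{\partial t^{\alpha}}-\frac{\partial^2 \underline{u}_1^{\ast}}{\partial x^2}=f_1\bigl(\underline{u}_1^{\ast},\overline{u}_2^{\ast}\bigr),
$$
and similarly for $f_2$. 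Neither limit solves (\ref{4.3}) unless you first prove $\overline{u}_i^{\ast}=\underline{u}_i^{\ast}$. This identification is where the paper spends most of its effort: it sets $w=(\overline{u}_1-\underline{u}_1)+(\overline{u}_2-\underline{u}_2)\geq 0$, shows $\partial_t^{\alpha}w-\partial_x^2 w-Lw\leq 0$ with zero data, and kills a hypothetical positive maximum by adding an auxiliary function $u(t)$ that is linear-decreasing on $(0,t^{\ast})$, estimating its Caputo derivative at the maximum point by $-\tfrac{w(\hat x,\hat t)}{2\Gamma(1-\alpha)}\hat t^{-\alpha}$, which forces a contradiction when $T\leq(1/(2L\Gamma(1-\alpha)))^{1/\alpha}$; the restriction on $T$ is then removed by restarting on successive time slabs. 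Your uniqueness paragraph is aimed at two genuine solutions $u$ and $v$ of (\ref{4.3}) and so does not, as written, close this hole; you would need to apply the same Gronwall-type machinery to the crossed system satisfied by $w_1,w_2$ (which does admit the bound $L(w_1+w_2)$ on the right, using $w_i\geq 0$).

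Two further points. First, your energy route to uniqueness is a genuinely different (and viable) alternative to the paper's, which instead observes that any other solution $u'$ in $[V,U]$ is simultaneously an upper and a lower solution and is therefore squeezed between $\underline{u}_i^{(k)}$ and $\overline{u}_i^{(k)}$ for every $k$; the paper's route avoids the Caputo coercivity inequality $\int_\Omega w\,\partial_t^{\alpha}w\,dx\geq\tfrac12\partial_t^{\alpha}\|w\|^2$ and the fractional Gronwall lemma, both of which are true but would have to be stated and justified (the coercivity inequality in particular needs regularity of $w$ in $t$). Second, note that the sign issue in your displayed inequality (multiplying $\partial_t^{\alpha}w_i-\partial_x^2 w_i\leq L(|w_1|+|w_2|)$ by a possibly negative $w_i$ does not preserve the inequality) should be repaired by starting from the equality and bounding $\bigl|f_i(u)-f_i(v)\bigr|$ before testing.
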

\begin{proof}
See Appendix A.
\end{proof}

Let us denote $f(N,P)=N(1-N-\frac{a P}{P+N})$ and $g(N,P)=\sigma P(-\frac{\gamma+\delta \beta P}{1+\beta P}+\frac{N}{P+N})$. We certify that both (\ref{Eq1.1})-(\ref{Eq1.2}) and (\ref{Eq1.1}) and ($1.2^\prime$) have lower and upper solutions, $(0,0)$ and $(1,L_1)$, where $L_1>1/\gamma$. First, it is needed to check that $f$ is quasi-monotone decreasing function and $g$ is quasi-monotone increasing function. Since the derivatives of the nonlinear terms are
\[\frac{\partial f(N,P)}{ \partial P}=-\frac{aN^2}{(P+N)^2}\]
and
\[\frac{\partial g(N,P)}{ \partial N}=\frac{\sigma P^2}{(P+N)^2},\]
 it is obvious that $f$ is decreasing w.r.t $P$ and $g$ is increasing w.r.t $N$.
At the same time, it can be noted that both (\ref{Eq1.2}) and ($1.2^\prime$)  satisfy the conditions (\ref{4.4.1}) directly.
And it can be easily checked that the upper solution $U(x,t)=(\tilde{N}(x,t),\tilde{P}(x,t))=(1,L_1)$
 and the lower solution $V(x,t)=(\utilde{N}(x,t),\utilde{P}(x,t))=(0,0)$ satisfy
\begin{align}
\frac{\partial^{\alpha}\tilde{N}}{\partial t^{\alpha}}&-\frac{\partial^2 \tilde{N}}{\partial x^2}-f(\tilde{N},\utilde{P})
\geq 0\geq \frac{\partial^{\alpha}\utilde{N}}{\partial t^{\alpha}}-\frac{\partial^2 \utilde{N}}{\partial x^2}-f(\utilde{N},\tilde{P}),\\
\frac{\partial^{\alpha}\tilde{P}}{\partial
t^{\alpha}}&-\frac{\partial^2 \tilde{P}}{\partial
x^2}-g(\tilde{N},\tilde{P})\geq 0\geq
\frac{\partial^{\alpha}\utilde{P}}{\partial
t^{\alpha}}-\frac{\partial^2 \utilde{P}}{\partial
x^2}-g(\utilde{N},\utilde{P}).
\end{align}
So if we specify the initial condition of (\ref{Eq1.1}) such that $N(x,0) \in [0,1]$ and $P(x,0) \in [0,L_1]$ for any $x \in (l,r)$, then the initial condition satisfy (\ref{4.4.2}). From Theorem \ref{Th4.2} the exact solution of (\ref{Eq1.1})-(\ref{Eq1.2}) (or (\ref{Eq1.1}) and ($1.2^\prime$)) is bounded and positive.

\subsection{Positiveness and boundedness of the numerical solutions}
We show that the numerical schemes (\ref{2.10})-($2.13^\prime$) preserve the positiveness and boundedness of the corresponding analytical solutions of (\ref{Eq1.1})-($1.2^\prime$).

First, for (\ref{Eq1.1}) with the initial conditions $N(x,0) \in (0,1]$ and $P(x,0) \in (0,L_1]$, $L_1<\frac{1}{\gamma}$, for any $x \in (l,r)$, we have its discretization scheme
$$
\begin{array}{llll}
\displaystyle
N^{n+1}_i-C_{\alpha}\delta_x^2 N^{n+1}_{i}=\sum\limits^{n-1}_{j=0}(b_{j}-b_{j+1})N_{i}^{n-j}+b_{n}N_{i}^{0}
\\
\\
\displaystyle~~~~~~~~~~~~~~~~ ~~~~~~~~~~~~+C_{\alpha}N^{n}_{i}(1-N^{n}_{i}-\frac{a P^{n}_{i}}{P^{n}_{i}+N^{n}_{i}}),
\\
\\
\displaystyle P^{n+1}_i-C_{\alpha}\delta_x^2 P^{n+1}_{i}=\sum\limits^{n-1}_{j=0}(b_{j}-b_{j+1})P_{i}^{n-j}+b_{n}P_{i}^{0}
\\
\\
\displaystyle~~~~ ~~~~~~~~~~~~~~~~~~~~~~~~+C_{\alpha}\sigma P^{n}_{i}(-\frac{\gamma+\delta \beta P^{n}_{i}}{1+\beta P^{n}_{i}}+\frac{N^{n}_{i}}{P^{n}_{i}+N^{n}_{i}}).
\end{array}
$$
We use the induction method to prove $0<N_i^n\leq 1$ and $0<P_i^n\leq L_1$ for any $i$ and $n$.  First, $0<N_i^0\leq 1$ and $0<P_i^0\leq L_1$ hold obviously. Now suppose $0<N_i^k\leq 1$ and $0<P_i^k\leq L_1$ for any $ k \leq n$, we prove that it still holds when $k=n+1$.

%
%
First of all, denote $w:=C_{\alpha}N^{n}_{i}(1-N^{n}_{i}-\frac{a
P^{n}_{i}}{P^{n}_{i}+N^{n}_{i}})$ and $v:=C_{\alpha}\sigma
P^{n}_{i}(-\frac{\gamma+\delta \beta P^{n}_{i}}{1+\beta
P^{n}_{i}}+\frac{N^{n}_{i}}{P^{n}_{i}+N^{n}_{i}})$. When
$C_{\alpha}\leq1$, it's easy to obtain
\begin{equation}
w\leq C_{\alpha}N^{n}_{i}(1-N^{n}_{i})\leq
C_{\alpha}(1-N^{n}_{i}),\label{1}
\end{equation}
\begin{equation}
w\geq C_{\alpha}N^{n}_{i}(1-N^{n}_{i}-a)\geq -C_{\alpha}aN^{n}_{i};\label{2}
\end{equation}
and
\begin{equation}
v\leq C_{\alpha}\sigma P^{n}_{i}(-\frac{\gamma+\delta \beta
P^{n}_{i}}{1+\beta P^{n}_{i}}+\frac{1}{P^{n}_{i}})\leq
C_{\alpha}\sigma (1-P^{n}_{i}\gamma),\label{3}
\end{equation}
\begin{equation}
v\geq C_{\alpha}\sigma
P^{n}_{i}(-\delta+\frac{N^{n}_{i}}{P^{n}_{i}+N^{n}_{i}})\geq
-C_{\alpha}\sigma P^{n}_{i}\delta.\label{4}
\end{equation}
Because of $b_{j}> b_{j+1}$, $0<N_i^n\leq 1$ and $0<P_i^n\leq L_1$, we know
\begin{equation}
(1-b_1)N^{n}_{i}\leq\sum\limits^{n-1}_{j=0}(b_{j}-b_{j+1})N_{i}^{n-j}+b_{n}N_{i}^{0}\leq
\frac{1-b_1}{2}N^{n}_{i}+\frac{1+b_1}{2},\label{5}
\end{equation}
and
\begin{equation}
(1-b_1)P^{n}_{i}\leq\sum\limits^{n-1}_{j=0}(b_{j}-b_{j+1})P_{i}^{n-j}+b_{n}P_{i}^{0}\leq
\frac{1-b_1}{2}P^{n}_{i}+\frac{1+b_1}{2}L_1.\label{6}
\end{equation}
Owing to (\ref{1}), (\ref{2}) and (\ref{5}), when
$C_{\alpha}<\min\{\frac{1-b_1}{a},\frac{1-b_1}{2}\} $, we get
\begin{equation}
0< N^{n+1}_i-C_{\alpha}\delta_x^2 N^{n+1}_{i}\leq1.\label{4.15}
\end{equation}
In the similar way, owing to (\ref{3}), (\ref{4}) and (\ref{6}), when
$C_{\alpha}<\frac{(1-b_1)}{\sigma\delta}$, we get
\begin{equation}
0< P^{n+1}_i-C_{\alpha}\delta_x^2 P^{n+1}_{i}\leq L_1.\label{4.16}
\end{equation}
From (\ref{4.15}) and ({\ref{4.16}}), we can get $0<N_i^{n+1}\leq 1$ and $0<P_i^{n+1}\leq L_1$.
In fact, if  $0<N_i^{n+1}\leq 1$  doesn't hold, then there exists $i$ such
that $$N_i^{n+1}\leq 0 \quad {\rm or} \quad N_i^{n+1}>1.$$ If
$N_i^{n+1}\leq 0$, then we choose the minimum in $i=0,\cdots,M$,
and denote it by $N_k^{n+1}$, which is non-positive. Thanks to (\ref{4.15}), we know that
$$N^{n+1}_k-\frac{C_{\alpha}}{h^2}(N^{n+1}_{k-1}-2N^{n+1}_{k}+N^{n+1}_{k+1})>0.$$
So $N_k^{n+1}\leq 0$ implies
$$N^{n+1}_{k}>\frac{N^{n+1}_{k-1}+N^{n+1}_{k+1}}{2}. $$
Then we get $ N^{n+1}_{k-1}< N^{n+1}_{k}$ or
$N^{n+1}_{k+1}<N^{n+1}_{k}$, which still hold even at the boundary, including the Dirichlet and Neummann boundaries.
This is contradictory with the assumption that $N_k^{n+1}$ is the minimum. 
If $N_i^{n+1}>1$, then choose the maximum in $ i=0,\cdots,M$,
and denote it by $N_l^{n+1} $, and $N_l^{n+1}>1$ holds. Since
$N^{n+1}_l-\frac{C_{\alpha}}{h^2}(N^{n+1}_{l-1}-2N^{n+1}_{l}+N^{n+1}_{l+1})\leq1
$, then
$$\frac{C_{\alpha}}{h^2}(N^{n+1}_{l-1}-2N^{n+1}_{l}+N^{n+1}_{l+1})\geq
N^{n+1}_l-1 >0.$$ So
$$N^{n+1}_{l}<\frac{N^{n+1}_{l-1}+N^{n+1}_{l+1}}{2}.$$ Then  we get $
N^{n+1}_{l}< N^{n+1}_{l-1}$ or $N^{n+1}_{l}< N^{n+1}_{l+1}$, which still hold even at the boundary, including the Dirichlet and Neummann boundaries. This is contradictory with the assumption.

Similarly, we can verify that
$0<P_i^{n+1}\leq L_1$.

\section{Numerical experiments}
We present the simulation results of the schemes (\ref{2.10})-(\ref{2.13}) for Dirichlet boundary and  (\ref{2.10})-(\ref{2.12}) and ($2.13^\prime$) for Neummann boundary to verify all the above theoretical results. In particular, the subdiffusive predator-prey model (\ref{Eq1.1}) with homogeneous Neummann boundary conditions ($1.2^\prime$) is simulated, and the pictures are displayed. Example 5.1 and 5.2 numerically confirm the unconditional stability of the numerical schemes and first order convergence in time for any $\alpha \in (0,1)$. Example 5.3 is for the subdiffusive predator-prey model with specified initial and boundary conditions.

In the computations of Examples 5.1 and 5.2, we take the spacial steplength $h=0.0005$, which is small enough so that the spacial error can be neglected for obtaining convergent rate in time direction. The errors are measured at time $T=1$ and by $l^\infty$ norm. And $\alpha$ is, respectively, taken as $0.3$, $0.6$ and $0.9$.

\begin{example}
For (\ref{2.1})-(\ref{2.3}), we take its  exact analytical solution as
\begin{equation}
u(x,t)=t^2\sin(2\pi x),\label{E1}
\end{equation}
and the non-linear term as
\begin{equation}
f(u)=\frac{1}{u+4}.\label{E2}
\end{equation}
Then, on the right hand side, we need to add the forcing term
\begin{equation}
g(x,t)=\frac{2}{\Gamma(3-\alpha)}t^{(2-\alpha)}\sin(2\pi x)+4\pi^2t^2\sin(2\pi x)-\frac{1}{t^2\sin(2\pi x)+4},\label{E3}
\end{equation}
and the corresponding initial and boundary conditions are respectively
\begin{equation}
u(x,0)=0,\label{E4}
\end{equation}
\begin{equation}
u(0,t)=u(1,t)=0.
\end{equation}
\end{example}


\begin{table}[ht]
\tbl{The error and convergent rate of the proposed scheme for Example 5.1, when $\alpha=0.3$ and $h=0.0005$.}
{
\begin{tabular}{@{}ccc@{}}
 \toprule
 $\tau  $ & $e(h,\tau)$ &rate    \\
 \colrule
$\frac{1}{8}$ & $ 1.628403729893591e-003  $ &   $   $  \\
$\frac{1}{16} $&  $  8.461705972403477e-004  $ &   $ 0.94444$   \\
$\frac{1}{32}$ &  $  4.303180907271331e-004$  &    $0.97555$  \\
$\frac{1}{64}$ &  $ 2.166950795682299e-004$  &   $ 0.98974  $  \\
$\frac{1}{128}$ &  $ 1.088150760031326e-004$  &   $ 0.99379 $ \\
$\frac{1}{256}$ &  $  5.474834928109740e-005$  &   $ 0.99099 $ \\
 \botrule
\end{tabular}
}
\end{table}


\begin{table}[ht]
\tbl{The error and convergent rate of the proposed scheme for Example 5.1, when $\alpha=0.6$ and $h=0.0005$.}
{\begin{tabular}{@{}ccc@{}}
 \toprule
 $\tau  $ & $e(h,\tau)$ & rate  \\
 \colrule
$\frac{1}{8}$ & $  2.021632915774174e-003  $ &   $   $  \\
$\frac{1}{16} $&  $ 9.783209560871864e-004  $ &   $ 1.0471$   \\
$\frac{1}{32}$ &  $ 4.709883249863767e-004$  &    $1.0546$   \\
$\frac{1}{64}$ &  $  2.270387164748922e-004 $  &   $ 1.0528  $ \\
$\frac{1}{128}$ &  $ 1.100565972464995e-004  $  &   $ 1.0447   $ \\
$\frac{1}{256}$ &  $5.382548524024422e-005 $  &   $  1.0319$ \\
 \botrule
\end{tabular}}
\end{table}


\begin{table}[ht]
\tbl{The error and convergent rate of the proposed scheme for Example 5.1, when $\alpha=0.9
$ and $h=0.0005$.} {\begin{tabular}{@{}ccc@{}} \toprule
 $\tau $ & $e(h,\tau)$ &rate  \\
 \colrule
$\frac{1}{8}$ & $ 3.392154303510031e-003  $ &   $   $  \\
$\frac{1}{16} $&  $ 1.657079757972468e-003 $ &   $1.0336 $   \\
$\frac{1}{32}$ &  $ 8.035388508833563e-004$  &    $1.0442$   \\
$\frac{1}{64}$ &  $  3.883816156070585e-004 $  &   $   1.0489$ \\
$\frac{1}{128}$ &  $ 1.876360100631080e-004 $  &   $  1.0495 $ \\
$\frac{1}{256}$ &  $9.083194250991689e-005 $  &   $  1.0467 $ \\ \botrule
\end{tabular}}
\end{table}

\begin{example}
For (\ref{2.1}), (\ref{2.2}) and ($2.3^\prime$), the exact solution and the boundary condition are, respectively, taken as $t^2\cos(2\pi x)$ and
\[\frac{\partial u(x,t)}{\partial x}|_{x=0}=\frac{\partial u(x,t)}{\partial x}|_{x=1}=0.\]
 We still use (\ref{E2}) and (\ref{E4}) as the non-linear term and initial condition, respectively.
 And the following forcing term is needed to add to the right hand side of the equation,
\begin{equation}
g(x,t)=\frac{2}{\Gamma(3-\alpha)}t^{(2-\alpha)}\cos(2\pi x)+4\pi^2t^2\cos(2\pi x)-\frac{1}{t^2\cos(2\pi x)+4}.
\end{equation}

\begin{table}[ht]
\tbl{The error and convergent rate of the proposed scheme for Example 5.2, when $\alpha=0.3$ and $h=0.0005$.}
 {
 \begin{tabular}{@{}ccc@{}}
  \toprule
 $\tau  $ & $e(h,\tau)$ &rate  \\
 \colrule
$\frac{1}{8}$ & $ 2.887575706533641e-003   $ &   $   $  \\
$\frac{1}{16} $&  $    1.527198871648983e-003$ &   $0.91897   $   \\
$\frac{1}{32}$ &  $ 7.855525404816266e-004  $  &    $0.95911  $   \\
$\frac{1}{64}$ &  $  3.983144175851994e-004  $  &   $ 0.97980   $ \\
$\frac{1}{128}$ &  $  2.006793113920047e-004 $  &   $0.98902   $ \\
$\frac{1}{256}$ &  $ 1.009537078571210e-004 $  &   $   0.99120 $ \\ \botrule
\end{tabular}}
\end{table}

\begin{table}[ht]
\tbl{The error and convergent rate of the proposed scheme for Example 5.2, when $\alpha=0.6$ and $h=0.0005$.}
 {
 \begin{tabular}{@{}cccc@{}}
  \toprule
 $\tau $ & $e(h,\tau)$ &rate  \\
 \colrule
$\frac{1}{8}$ & $  2.713892842113319e-003  $ &   $   $  \\
$\frac{1}{16} $&  $  1.337146145350632e-003 $ &   $ 1.0212  $   \\
$\frac{1}{32}$ &  $ 6.550178686808295e-004$  &    $1.0296 $   \\
$\frac{1}{64}$ &  $  3.205477251759792e-004   $  &   $1.0310     $ \\
$\frac{1}{128}$ &  $  1.572572233041747e-004 $  &   $  1.0274   $ \\
$\frac{1}{256}$ &  $ 7.755217692539951e-005 $  &   $   1.0199  $ \\ \botrule
\end{tabular}}
\end{table}

\begin{table}[ht]
\tbl{The error and convergent rate of the proposed scheme for Example 5.2, when $\alpha=0.9$ and $h=0.0005$.}
 {
 \begin{tabular}{@{}ccc@{}}
  \toprule
 $\tau  $ & $e(h,\tau)$ &rate  \\
 \colrule
$\frac{1}{8}$ & $   3.769255105664726e-003 $ &   $   $  \\
$\frac{1}{16} $&  $   1.832929458344568e-003 $ &   $1.0401   $   \\
$\frac{1}{32}$ &  $ 8.885772877893494e-004 $  &    $1.0446 $   \\
$\frac{1}{64}$ &  $   4.302894303225280e-004 $  &   $ 1.0462    $ \\
$\frac{1}{128}$ &  $  2.084713949954686e-004 $  &   $ 1.0455   $ \\
$\frac{1}{256}$ &  $ 1.012302249301378e-004 $  &   $  1.0422    $ \\ \botrule
\end{tabular}}
\end{table}
\end{example}

\begin{example}
Consider the reaction diffusion equation\cite{aly2011}
\begin{eqnarray}
D^{\alpha}_{t} N &= & d_1 \frac{\partial^2 N}{\partial x^2}+N(1-N-\frac{a P}{P+N}),\nonumber\\
D^{\alpha}_t P &= & d_2\frac{\partial^2 P}{\partial x^2}+\sigma P(-\frac{\gamma+\delta \beta P}{1+\beta P}+\frac{N}{P+N}),\label{e2}
\end{eqnarray}
with the homogeneous Neummann boundary conditions on the domain $\Omega=[0,1]$.
\end{example}

Let us denote  $f(N,P)=N(1-N-\frac{a P}{P+N})$, $g(N,P)=\sigma P(-\frac{\gamma+\delta \beta P}{1+\beta P}+\frac{N}{P+N})$, and define $(\bar{N},\bar{P})$ as the equilibrium point of (\ref{e2}).  In the case, $\sigma=1,a=1.1, \gamma=0.05, \beta=1$, and $\delta=0.5$, then as
$f(\bar{N},\bar{P})=0$ and $g(\bar{N},\bar{P})=0$, we can obtain the equilibrium point $(\bar{N},\bar{P})=(0.113585,0.471397)$.
The simulations were performed for the system on a fixed grid
with spatial stepsize $h=0.005$ and time stepsize $\tau=0.1$. As the initial
condition, we use
\begin{align}
&N(x,0)=\bar{N}+0.0214\cos(\pi x),\nonumber\\
&P(x,0)=\bar{P}+0.0066\cos(\pi x).\nonumber
\end{align}

We focus predominantly on displaying the properties of the numerical solutions for different time fractional order $\alpha$, see Fig. 1, Fig. 2, and Fig. 3.

\begin{figure}[pb]
\centerline{\psfig{file=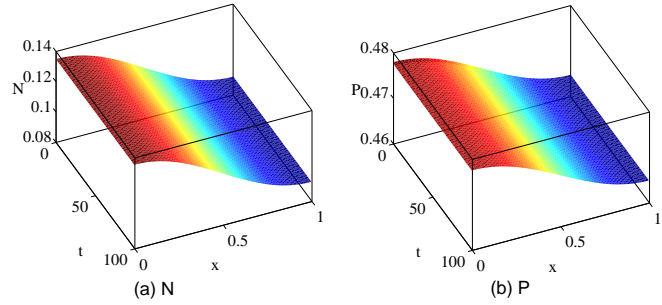,width=4.0in}}
\vspace*{8pt}
\caption{Numerical solution for $d_1=0.005, d_2=0.2, \alpha=0.2$.}
\end{figure}

\begin{figure}[pb]
\centerline{\psfig{file=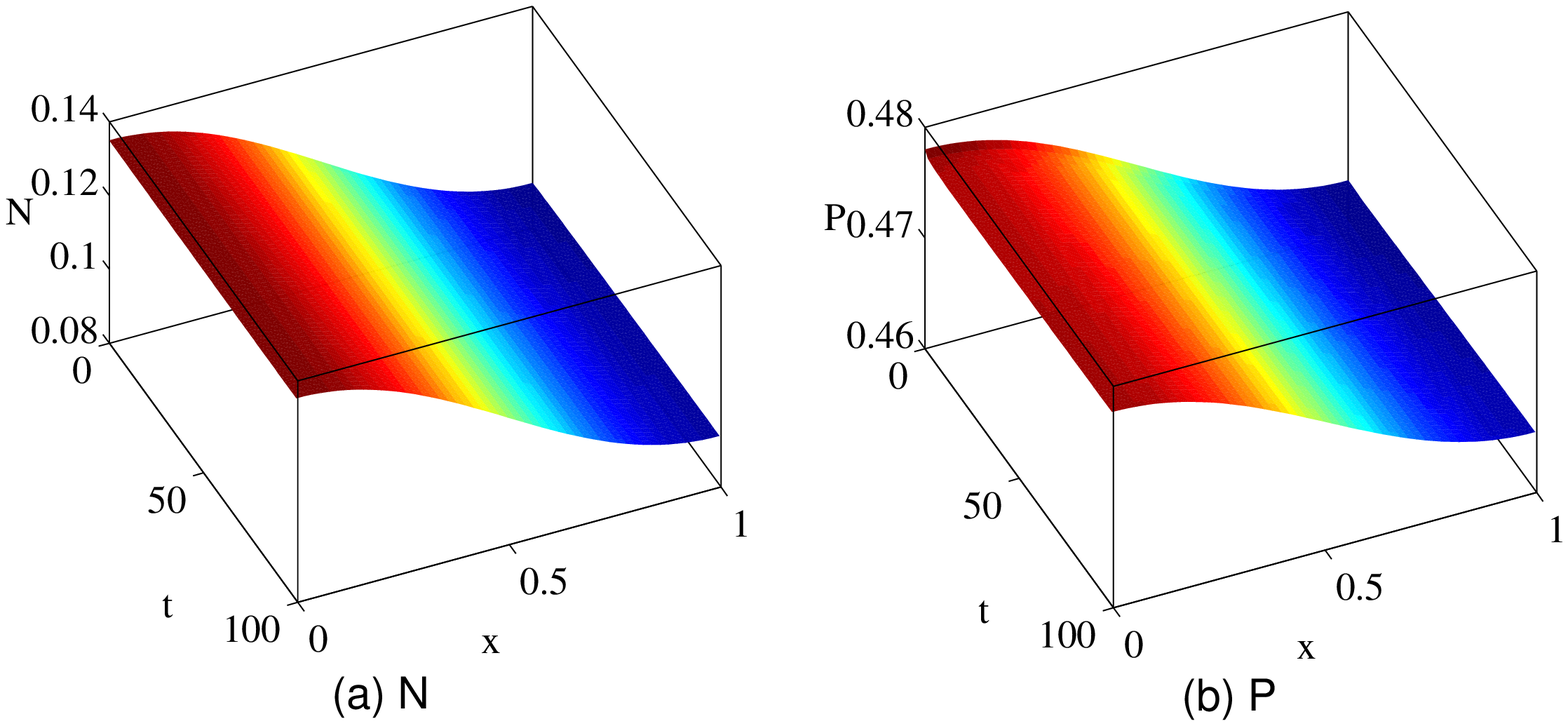,width=4.0in}}
\vspace*{8pt}
\caption{Numerical solution for $d_1=0.005, d_2=0.2, \alpha=0.5$.}
\end{figure}

\begin{figure}[pb]
\centerline{\psfig{file=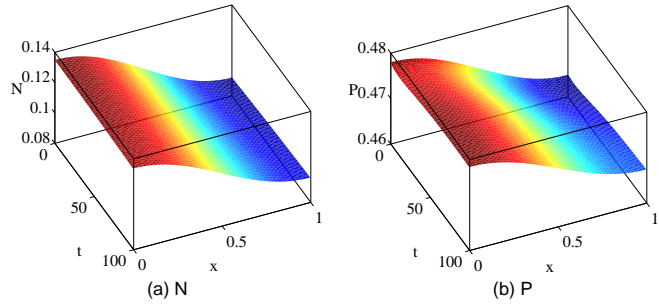,width=4.0in}}
\vspace*{8pt}
\caption{Numerical solution for $d_1=0.005, d_2=0.2, \alpha=0.9$.}
\end{figure}

\section{Conclusions}
We introduce the unconditional stable semi-implicit numerical schemes for subdiffusive reaction diffusion equation with Dirichlet boundary condition and Neummann boundary condition, respectively. And the subdiffusive predator-prey model is detailedly discussed. We prove that its analytical solution is positive and bounded. Then we show that the proposed numerical schemes preserve the positivity and boundedness of the analytical solutions. The extensive numerical experiments are performed to confirm the theoretical results and show the dissipative properties of subdiffusive predator-prey model.

\section*{Acknowledgements}
This work was supported by the Program for New Century Excellent Talents in University under Grant No. NCET-09-0438, the
National Natural Science Foundation of China under Grant No. 10801067 and No. 11271173, and the Fundamental Research Funds for the Central
Universities under Grant No. lzujbky-2010-63 and No. lzujbky-2012-k26.

\appendix

\section{}

 Proof of Theorems $4.2$:  Taking the initial iteration function as
 \begin{align}
 (\bar{u}^{(0)}_1,\bar{u}^{(0)}_2)=(\tilde{u}_1,\tilde{u}_2),\nonumber\\
 (\underbar {u}^{(0)}_1,\underbar{u}^{(0)}_2)=(\utilde{u}_1,\utilde{u}_2),\nonumber
 \end{align}
 with $\utilde{u}_1 \leqslant  \tilde{u}_1$ and $\utilde{u}_2 \leqslant  \tilde{u}_2$, define the following iteration
  \begin{equation}\label{A1}
    \left\{
     \begin{aligned}
     &\frac{\partial^{\alpha}\bar{u}_1^{(k)}}{\partial t^{\alpha}}-\frac{\partial^2 \bar{u}_1^{(k)}}{\partial x^2}+L\cdot \bar{u}^{(k)}_1=L\cdot \bar{u}^{(k-1)}_1+f_1(\bar{u}_1^{(k-1)},\underbar{u}_2^{(k-1)})  \\
   &\frac{\partial^{\alpha}\bar{u}_2^{(k)}}{\partial t^{\alpha}}-\frac{\partial^2 \bar{u}_2^{(k)}}{\partial x^2}+L\cdot
  \bar{u}^{(k)}_2=L\cdot \bar{u}^{(k-1)}_2+f_2(\bar{u}_1^{(k-1)},\bar{u}_2^{(k-1)})  \\
   &\frac{\partial^{\alpha}\underbar{u}_1^{(k)}}{\partial t^{\alpha}}-\frac{\partial^2 \underbar{u}_1^{(k)}}{\partial x^2}+L\cdot
   \underbar{u}^{(k)}_1=L\cdot \underbar{u}^{(k-1)}_1+f_1(\underbar{u}_1^{(k-1)},\bar{u}_2^{(k-1)})  \\
   &\frac{\partial^{\alpha}\underbar{u}_2^{(k)}}{\partial t^{\alpha}}-\frac{\partial^2 \underbar{u}_2^{(k)}}{\partial x^2}+L\cdot
   \underbar{u}^{(k)}_2=L\cdot \underbar{u}^{(k-1)}_2+f_2(\underbar{u}_1^{(k-1)},\underbar{u}_2^{(k-1)})  \\
   & B\bar{u}_i^{(k)} |_{\partial\Omega\times(0,T]}=B \underbar{u}_i^{(k)} |_{\partial\Omega\times(0,T]}
   =g_i(x,t)|_{\partial\Omega\times(0,T]},\,\,(i=1,2) \\
   &\bar{u}_i^{(k)}(x,0)=\underbar{u}_i^{(k)}(x,0)=\varphi_i(x),\,\,(x\in\bar{\Omega},\,i=1,2),
     \end{aligned}
     \right.
    \end{equation}
    where $L$ is the maximum of Lipschiz constants of $f_1$ and $f_2$.
Subtracting  (\ref{4.4.3}) and (\ref{4.4.4}) from  (\ref{A1}) leads to
     \begin{equation}
    \left\{
     \begin{aligned}
     &\frac{\partial^{\alpha}(\bar{u}_1^{(1)}-\bar{u}_1^{(0)})}{\partial t^{\alpha}}-\frac{\partial^2 (\bar{u}_1^{(1)}-\bar{u}_1^{(0)})}{\partial x^2}+L\cdot (\bar{u}^{(1)}_1-\bar{u}^{(0)}_1)\leq0  \nonumber\\
   &\frac{\partial^{\alpha}(\bar{u}_2^{(1)}-\bar{u}_2^{(0)})}{\partial t^{\alpha}}-\frac{\partial^2 (\bar{u}_2^{(1)}-\bar{u}_2^{(0)})}{\partial x^2}+L\cdot
  (\bar{u}_2^{(1)}-\bar{u}_2^{(0)})\leq0 \nonumber\\
   &\frac{\partial^{\alpha}(\underbar{u}_1^{(0)}-\underbar{u}_1^{(1)})}{\partial t^{\alpha}}-\frac{\partial^2 (\underbar{u}_1^{(0)}-\underbar{u}_1^{(1)})}{\partial x^2}+L\cdot
   (\underbar{u}_1^{(0)}-\underbar{u}_1^{(1)})\leq0 \nonumber\\
   &\frac{\partial^{\alpha}(\underbar{u}_2^{(0)}-\underbar{u}_2^{(1)})}{\partial t^{\alpha}}-\frac{\partial^2 (\underbar{u}_2^{(0)}-\underbar{u}_2^{(1)})}{\partial x^2}+L\cdot
   (\underbar{u}_2^{(0)}-\underbar{u}_2^{(1)})\leq0 \nonumber\\
   & B(\bar{u}_i^{(1)}-\bar{u}_i^{(0)}) |_{\partial\Omega\times(0,T]}=B (\underbar{u}_i^{(0)}-\underbar{u}_i^{(1)}) |_{\partial\Omega\times(0,T]}
   =0,\,\,(i=1,2) \nonumber\\
   &\bar{u}_i^{(1)}(x,0)-\bar{u}_i^{(0)}(x,0)=\underbar{u}_i^{(0)}(x,0)-\underbar{u}_i^{(1)}(x,0)=0,\,\,(x\in\bar{\Omega},\,i=1,2).\nonumber
     \end{aligned}
     \right.
    \end{equation}
    According to the maximum principle Theorem \ref{Th4.1} and Remark \ref{Th4.1}, we know that
    \[\bar{u}^{(1)}_i\leq\bar{u}^{(0)}_i,\underbar{u}^{(0)}_i\leq\underbar{u}^{(1)}_i\,\,(k=0,1,\cdots). \]
   Supposing $\bar{u}^{(k)}_i\leq\bar{u}^{(k-1)}_i,\,\underbar{u}^{(k-1)}_i\leq\underbar{u}^k_i $, and then that the nonlinear term $f_1$ is quasi-monotone decreasing results in
   \begin{equation}\label{Af1}
   \begin{array}{ll}
\displaystyle
   f_1(\bar{u}_1^{(k)},\underbar{u}_2^{(k)})-f_1(\bar{u}_1^{(k-1)},\underbar{u}_2^{(k-1)})
   \\
   \\
   \displaystyle=f_1(\bar{u}_1^{(k)},\underbar{u}_2^{(k)})-f_1(\bar{u}_1^{(k)},\underbar{u}_2^{(k-1)})
   +f_1(\bar{u}_1^{(k)},\underbar{u}_2^{(k-1)}) -f_1(\bar{u}_1^{(k-1)},\underbar{u}_2^{(k-1)})
   \\
   \\
   \displaystyle\leq0+L\cdot|\bar{u}_1^{(k)} - \bar{u}_1^{(k-1)}|\\
   \\
   \displaystyle\leq L\cdot(\bar{u}_1^{(k-1)} - \bar{u}_1^{(k)}).
   \end{array}
   \end{equation}
   In a similar way we also get
   \begin{equation}\label{Af2}
   f_2(\bar{u}_1^{(k)},\bar{u}_2^{(k)})-f_2(\bar{u}_1^{(k-1)},\bar{u}_2^{(k-1)})\leq L\cdot(\bar{u}_2^{(k-1)} - \bar{u}_2^{(k)}),
   \end{equation}
   \begin{equation}\label{Af3}
   f_1(\underbar{u}_1^{(k )},\bar{u}_2^{(k )})-f_1(\underbar{u}_1^{(k-1)},\bar{u}_2^{(k-1)})\leq L\cdot(\underbar{u}_1^{(k)}-\underbar{u}_1^{(k-1)}),
   \end{equation}
   \begin{equation}\label{Af4}
   f_2(\underbar{u}_1^{(k )},\underbar{u}_2^{(k )})-f_2(\underbar{u}_1^{(k-1)},\underbar{u}_2^{(k-1)})\leq L\cdot(\underbar{u}_2^{(k)}-\underbar{u}_2^{(k-1)}).
   \end{equation}

  So, together with (\ref{Af1})-(\ref{Af4}), the iteration (\ref{A1}) implies
  \begin{equation*}
    \left\{
    \begin{array}{lll}
     \displaystyle\frac{\partial^{\alpha}(\bar{u}_1^{(k+1)}-\bar{u}_1^{(k)})}{\partial t^{\alpha}}-\frac{\partial^2 (\bar{u}_1^{(k+1)}-\bar{u}_1^{(k)})}{\partial x^2}+L\cdot (\bar{u}^{(k+1)}_1-\bar{u}^{(k)}_1)\\
     \\
     \displaystyle~~~~~~~~~~~~~~\leq L\cdot (\bar{u}^{(k)}_1-\bar{u}^{(k-1)}_1)+f_1(\bar{u}_1^{(k)},\underbar{u}_2^{(k)})-f_1(\bar{u}_1^{(k-1)},\underbar{u}_2^{(k-1)})\leq0 \\
     \\
   \displaystyle\frac{\partial^{\alpha}(\bar{u}_2^{(k+1)}-\bar{u}_2^{(k)})}{\partial t^{\alpha}}-\frac{\partial^2 (\bar{u}_2^{(k+1)}-\bar{u}_2^{(k)})}{\partial x^2}+L\cdot
  (\bar{u}_2^{(k+1)}-\bar{u}_2^{(k)})\\
     \\
     \displaystyle~~~~~~~~~~~~~~\leq L\cdot(\bar{u}_2^{(k)} - \bar{u}_2^{(k-1)})+ f_2(\bar{u}_1^{(k)},\bar{u}_2^{(k)})-f_2(\bar{u}_1^{(k-1)},\bar{u}_2^{(k-1)})\leq0 \\
     \\
   \displaystyle\frac{\partial^{\alpha}(\underbar{u}_1^{(k)}-\underbar{u}_1^{(k+1)})}{\partial t^{\alpha}}-\frac{\partial^2 (\underbar{u}_1^{(k)}-\underbar{u}_1^{(k+1)})}{\partial x^2}+L\cdot
   (\underbar{u}_1^{(k)}-\underbar{u}_1^{(k+1)})\\
   \\
   \displaystyle~~~~~~~~~~~~~~\leq L\cdot(\underbar{u}_1^{(k-1)}-\underbar{u}_1^{(k)})+ f_1(\underbar{u}_1^{(k )},\bar{u}_2^{(k )})-f_1(\underbar{u}_1^{(k-1)},\bar{u}_2^{(k-1)})\leq0 \\\\
   \displaystyle\frac{\partial^{\alpha}(\underbar{u}_2^{(k)}-\underbar{u}_2^{(k+1)})}{\partial t^{\alpha}}-\frac{\partial^2 (\underbar{u}_2^{(k)}-\underbar{u}_2^{(k+1)})}{\partial x^2}+L\cdot
   (\underbar{u}_2^{(k)}-\underbar{u}_2^{(k+1)})\\
   \\
   \displaystyle~~~~~~~~~~~~~~\leq L\cdot(\underbar{u}_2^{(k-1)}-\underbar{u}_2^{(k)})+f_2(\underbar{u}_1^{(k )},\underbar{u}_2^{(k )})-f_2(\underbar{u}_1^{(k-1)},\underbar{u}_2^{(k-1)})\leq0 \\\\
   \displaystyle B(\bar{u}_i^{(k+1)}-\bar{u}_i^{(k)}) |_{\partial\Omega\times(0,T]}=B (\underbar{u}_i^{(k)}-\underbar{u}_i^{(k+1)}) |_{\partial\Omega\times(0,T]}
   =0,\,\,(i=1,2) \\\\
   \displaystyle\bar{u}_i^{(k+1)}(x,0)-\bar{u}_i^{(k)}(x,0)=\underbar{u}_i^{(k)}(x,0)-\underbar{u}_i^{(k+1)}(x,0)=0,\,\,(x\in\bar{\Omega},\,i=1,2).
    \end{array}
     \right.
    \end{equation*}
Then there exists
  \[\bar{u}^{(k+1)}_i\leq\bar{u}^{(k)}_i,\,\underbar{u}^{(k)}_i\leq\underbar{u}^{(k+1)}_i\,\,(k=0,1,\cdots). \]
Recalling the iteration (\ref{A1}) again, we deduce
  \begin{equation}
    \left\{
     \begin{array}{ll}
     \displaystyle\frac{\partial^{\alpha}(\underbar{u}^{(k)}_1-\bar{u}^{(k)}_1)}{\partial t^{\alpha}}-\frac{\partial^2 (\underbar{u}^{(k)}_1-\bar{u}^{(k)}_1)}{\partial x^2}+L\cdot (\underbar{u}^{(k)}_1-\bar{u}^{(k)}_1)\\
     \\
     \displaystyle~~~~\leq L\cdot (\underbar{u}^{(k-1)}_1-\bar{u}^{(k-1)}_1)+f_1(\underbar{u}_1^{(k-1)},\bar{u}_2^{(k-1)})-f_1(\bar{u}_1^{(k-1)},\underbar{u}_2^{(k-1)})\leq0  \\
     \\
   \displaystyle\frac{\partial^{\alpha}(\underbar{u}^{(k)}_2-\bar{u}^{(k)}_2)}{\partial t^{\alpha}}-\frac{\partial^2 (\underbar{u}^{(k)}_2-\bar{u}^{(k)}_2)}{\partial x^2}+L\cdot(\underbar{u}^{(k)}_2-\bar{u}^{(k)}_2)\\
   \\
   \displaystyle~~~~\leq L\cdot (\underbar{u}^{(k-1)}_2-\bar{u}^{(k-1)}_2)+f_2(\underbar{u}_1^{(k-1)},\underbar{u}_2^{(k-1)})-f_2(\bar{u}_1^{(k-1)},\bar{u}_2^{(k-1)})\leq0  \\
   \\
   \displaystyle B (\underbar{u}^{(k)}_i-\bar{u}^{(k)}_i) |_{\partial\Omega\times(0,T]}=0, \,\,(i=1,2)  \\
   \\
   \displaystyle \underbar{u}^{(k)}_i(x,0)-\bar{u}^{(k)}_i(x,0) =0,\,\,(x\in\bar{\Omega},\,i=1,2),
     \end{array}
     \right.
    \end{equation}
then 
$$\underbar{u}^{(k)}_i\leq\bar{u}^{(k)}_i.$$
So
$$
 \utilde{u}_i\leq \underbar{u}^{(1)}_i\leq\cdots\leq\underbar{u}^{(k)}_i\leq\bar{u}^{(k)}_i\leq\cdots
  \leq\bar{u}^{(1)}_i\leq\tilde{u}_i, \,\,   (i=1,2).
$$
Note that $f_1$ is quasi-monotone decreasing and $f_2$ is quasi-monotone increasing, then there are
 \begin{align}
 &\lim_{k\rightarrow+\infty}\bar{u}^{(k)}_i=\bar{u}_i(x,t),\nonumber\\
 &\lim_{k\rightarrow+\infty}\underbar{u}^{(k)}_i=\underbar{u}_i(x,t),\,\,(i=1,2),\nonumber
 \end{align}
 which satisfy $\bar{u}_i\geqslant\underbar{u}_i$ and
 \begin{equation}
   \left\{
    \begin{aligned}
 &\frac{\partial^{\alpha}\bar{u}_1}{\partial t^{\alpha}}-\frac{\partial^2 \bar{u}_1}{\partial x^2}-f_1(\bar{u}_1,\underbar{u}_2)=0 \nonumber\\
 &\frac{\partial^{\alpha}\bar{u}_2}{\partial t^{\alpha}}-\frac{\partial^2 \bar{u}_2}{\partial x^2}-f_2(\bar{u}_1,\bar{u}_2)= 0 \nonumber\\
 &\frac{\partial^{\alpha}\underbar{u}_1}{\partial t^{\alpha}}-\frac{\partial^2\underbar{u}_1 }{\partial x^2}-f_1(\underbar{u}_1,\bar{u}_2)=0 \nonumber\\
 &\frac{\partial^{\alpha}\underbar{u}_2}{\partial t^{\alpha}}-\frac{\partial^2\underbar{u}_2 }{\partial x^2}-f_2(\underbar{u}_1,\underbar{u}_2)=0 \nonumber\\
 &B\bar{u}_i|_{\partial\Omega\times(0,T]}=B \underbar{u}_i |_{\partial\Omega\times(0,T]}=g_i(x,t) \nonumber\\
 &\bar{u}_i(x,0)=\underbar{u}_i(x,0)=\varphi_i(x),\,\,(x\in\bar{\Omega},\,i=1,2).\nonumber
       \end{aligned}
      \right.
     \end{equation}
Next we will certify
 \[\bar{u}_i=\underbar{u}_i=u_i,(i=1,2).\]
Defining  $w_1=\bar{u}_1-\underbar{u}_1$,  $w_2=\bar{u}_2-\underbar{u}_2$,  we have known $w_1\geq0$ and $w_2\geq0$ from above discussions.
 According to the iteration, we can obtain

\begin{align}
  \frac{\partial^{\alpha}w_1}{\partial t^{\alpha}}-\frac{\partial^2 w_1}{\partial x^2} &=
  f_1(\bar{u}_1,\underbar{u}_2)-f_1(\underbar{u}_1,\bar{u}_2)\nonumber \\
  &= f_1(\bar{u}_1,\underbar{u}_2)-f_1(\underbar{u}_1,\underbar{u}_2)
  +f_1(\underbar{u}_1,\underbar{u}_2)-f_1(\underbar{u}_1,\bar{u}_2)\nonumber \\
  &\leq  L\cdot(\bar{u}_1-\underbar{u}_1)+ L\cdot(\bar{u}_2-\underbar{u}_2) \nonumber\\
  &= L\cdot(w_1+w_2),\nonumber
  \end{align}

 \begin{align}
  \frac{\partial^{\alpha}w_2}{\partial t^{\alpha}}-\frac{\partial^2 w_2}{\partial x^2}& =
  f_2(\bar{u}_1,\bar{u}_2)-f_2(\underbar{u}_1,\underbar{u}_2)\nonumber\\
  &= f_2(\bar{u}_1,\bar{u}_2)-f_2(\bar{u}_1,\underbar{u}_2)
  +f_2(\bar{u}_1,\underbar{u}_2)-f_2(\underbar{u}_1,\underbar{u}_2)\nonumber\\
  &\leq  L\cdot(\bar{u}_2-\underbar{u}_2)+ L\cdot(\bar{u}_1-\underbar{u}_1) \nonumber\\
  &= L\cdot(w_1+w_2).\nonumber
  \end{align}

 So
 \begin{equation*}
   \left\{
    \begin{aligned}
      &\frac{\partial^{\alpha}w_1}{\partial t^{\alpha}}  - \frac{\partial^2 w_1}{\partial x^2} -L\cdot(w_1+w_2)\leq0\\
       &\frac{\partial^{\alpha}w_2}{\partial t^{\alpha}}  - \frac{\partial^2 w_2}{\partial x^2} -L\cdot(w_1+w_2)\leq0\\
      &B w_1 =0, B w_2 =0     \\
      & w_1(x,0)=0,  w_2(x,0)=0.
       \end{aligned}
      \right.
     \end{equation*}
Denoting $w=w_1+w_2 \ge 0$, it's obvious that
   \begin{equation} \label{4.7}
   \left\{
    \begin{aligned}
      &\frac{\partial^{\alpha}w}{\partial t^{\alpha}}  - \frac{\partial^2 w}{\partial x^2} -L\cdot w\leq0\\
      &B w =0      \\
      & w(x,0)=0.
       \end{aligned}
      \right.
     \end{equation}
Based on (\ref{4.7}),  next we try to prove that $w \equiv0$ in $\overline{\Omega}_T$. Suppose that $w$ obtains its maximum value at $(\hat{x},\hat{t})$, if $(\hat{x},\hat{t}) \in \Gamma_T$  and the boundary conditions are Dirichlet's, $w \equiv0$ holds obviously; if $(\hat{x},\hat{t}) \in \Gamma_T$ and $u(\hat{x},\hat{t})$ is strictly bigger than $u(x,t)$ for any $(x,t) \in \Omega_T$, and the boundary conditions are Neummann's, it can be shown that $w \equiv0$ holds by the ideas in Remark \ref{Remark4.1} and the following proof.

Now assume $(\hat{x},\hat{t}) \in \Omega_T$ and $w(\hat{x},\hat{t})>0$, then there exists $t^*\, (<\hat{t}) $ such that
 $w(x,t)<\frac{1}{2}w(\hat{x},\hat{t})$ for any $t\in(0,t^{*})$ and $x \in \overline{\Omega}$.
Introduce the function $u$ of $t$ such that it satisfies
  $$
  \left\{
    \begin{array}{l}
     \displaystyle\frac{ \partial u(t)}{\partial t}=-\frac{1}{2}\frac{w(\hat{x},\hat{t})}{t^{*}}\quad {\rm in}\,\,(0,t^{*}),\\
     \\
     \displaystyle u(0)=\frac{1}{2}w(\hat{x},\hat{t}),
   \end{array}
      \right.
  $$
and $u(t) =0, \, t \in [t^*,T]$, denote
 \begin{equation*}
    \bar{w}(x,t)=w(x,t)+u(t)\quad {\rm in} \,~ \Omega\times[0,T].
     \end{equation*}
 Then the maximum of $\bar{w}$ is still at $(\hat{x},\hat{t})$, and $\bar{w}$ satisfies the following inequality
  \begin{equation*}
    \frac{\partial^{\alpha}\bar{w}}{\partial t^{\alpha}}- \frac{\partial^2 \bar{w}}{\partial x^2}\leq L w+\frac{ \partial^{\alpha} u(t)}{\partial t^{\alpha}}.
     \end{equation*}
At $(\hat{x}, \hat{t})$, it follows that $\frac{\partial^{\alpha} \bar{w} (\hat{x},t)}{\partial t^{\alpha}}|_{t=\hat{t}}\geq0$,
 and $- \frac{\partial^2 \bar{w}(x,\hat{t})}{\partial x^2}|_{x=\hat{x}}\geq0$.
While
 \begin{equation*}
 \begin{array}{lll}
\displaystyle
 \frac{ \partial^{\alpha} u(t)}{\partial t^{\alpha}}|_{t=\hat{t}} &=& \displaystyle \frac{1}{\Gamma(1-\alpha)}\int_0^{\hat{t}}(\hat{t}-\tau)^{-\alpha}\frac{du(\tau)}{d\tau}d\tau\\
\\
 &=& \displaystyle-\frac{1}{2\Gamma(1-\alpha)}\int_0^{t^{*}}(\hat{t}-\tau)^{-\alpha}\frac{w(\hat{x},\hat{t})}{t^{*}}d\tau
\\
\\
 &=&\displaystyle -\frac{w(\hat{x},\hat{t})}{2\Gamma(1-\alpha)t^{*}}\int_0^{t^{*}}(\hat{t}-\tau)^{-\alpha}d\tau
\\
\\
 &<& \displaystyle -\frac{w(\hat{x},\hat{t})}{2\Gamma(1-\alpha)t^{*}}\int_0^{t^{*}}\hat{t}^{-\alpha}d\tau
\\
\\
 &=& \displaystyle -\frac{w(\hat{x},\hat{t})}{2\Gamma(1-\alpha)}\hat{t}^{-\alpha} .
\end{array}
\end{equation*}
Since $\hat{t} \leq T$, if $T\leq(\frac{1}{2L\Gamma(1-\alpha)})^\frac{1}{\alpha}$, then $T^\alpha\leq\frac{1}{2L\Gamma(1-\alpha)}$ and
\begin{equation*}
    L w(\hat{x},\hat{t})+\frac{ \partial^{\alpha} u(\hat{t})}{\partial t^{\alpha}}<0.
     \end{equation*}
We arrive at a contradiction, so that $w\equiv 0$ holds.
%

If $(\frac{1}{2L\Gamma(1-\alpha)})^\frac{1}{\alpha} < T \leq2(\frac{1}{2L\Gamma(1-\alpha)})^\frac{1}{\alpha}$, we take $\tilde{T} =(\frac{1}{2L\Gamma(1-\alpha)})^\frac{1}{\alpha}$.
In the domain $\Omega\times[0,\tilde{T}]$, we can get $w \equiv 0$.
For $\tilde{T}\leq t\leq T$, set $\tilde{t}=t-\tilde{T}$ and $\tilde{w}(\tilde{t})=w(\tilde{t}+\tilde{T})$.
So when   $ 0\leq\tilde{t}\leq T-\tilde{T}$,   $\tilde{w}$ satisfy
\begin{equation*}
   \left\{
    \begin{aligned}
      &\frac{\partial^{\alpha}\tilde{w}}{\partial t^{\alpha}}-\frac{\partial^2 \tilde{w}}{\partial x^2} -L\cdot \tilde{w}\leq0 \quad \\
      &B \tilde{w} =0      \\
      & \tilde{w}(x,0)=0.
       \end{aligned}
      \right.
     \end{equation*}
Since $ T-\tilde{T} \leq(\frac{1}{2L\Gamma(1-\alpha)})^\frac{1}{\alpha}$, we have $\tilde{w}=0$ in $\Omega\times[0, T-\tilde{T}]$.
 Then $w=0$ in $\Omega\times[\tilde{T},T]$.
Consequently $w=0$ in $\Omega\times[0, T]$. This process can be continued for any finite times, so we obtain $w\equiv 0$ in $\overline{\Omega}_T$ for any $T$.

Combining with the known $w_1\geq0$ and  $w_2\geq0$, $w\equiv0$ implies $w_1\equiv0$  and $w_2\equiv0$.
Then we get  $\bar{u}_1=\underbar{u}_1$  and   $\bar{u}_2=\underbar{u}_2$.  Setting
$$
u_i(x,t)=\bar{u}_i=\underbar{u}_i,\,\, i=(1,2),
$$
 then $u(x,t)=(u_1,u_2)$ solves (\ref{4.3}).

Next we prove the uniqueness of the solution.

Suppose that there exists another solution $u'(x,t)=(u'_1,u'_2)$ which satisfies $V(x,t)\leq u'(x,t)\leq U(x,t)$.
Obviously, $\underbar{u}_i^{(0)}\leq  u'_i\leq \bar{u}_i^{(0)}\,(i=1,2)$. On the one hand,
since $u'(x,t)$ can be considered as an upper solution, according to  $V(x,t)\leq u'(x,t)$,
we know $\underbar{u}_i^{(k)}\leq  u'_i$. Therefore $\lim_{k\rightarrow \infty} \underbar{u}_i^{(k)}(x,t)=u_i\leq u'_i$.
On the other hand, $u'(x,t)$ can also be considered as a lower solution, according to $u'(x,t)\leq U(x,t)$, we know $\lim_{k\rightarrow \infty} \bar{u}_i^{(k)}(x,t)=u_i\geq u'_i$.
Consequently $u'_i=u_i$.



\end{document}